\documentclass[11pt]{amsart}
\usepackage{latexsym}
\usepackage{amsmath}
\usepackage{amssymb}
\usepackage{color}

\newtheorem{Theorem}{Theorem}[section]
\newtheorem{lemma}[Theorem]{Lemma}
\newtheorem{prop}[Theorem]{Proposition}

\numberwithin{equation}{section}

\def\C{{\mathbb C}}
\def\R{{\mathbb R}}
\def\Re{{\rm Re}\:}
\def\Im{{\rm Im}\:}
\def\sgn{{\rm sgn}\:}
\def\cal{\mathcal}

\newcommand*{\defeq}{\mathrel{\vcenter{\baselineskip0.5ex \lineskiplimit0pt

                     \hbox{\scriptsize.}\hbox{\scriptsize.}}}%
                     =}
                     
\newcommand*{\qefed}{=\mathrel{\vcenter{\baselineskip0.5ex \lineskiplimit0pt

                     \hbox{\scriptsize.}\hbox{\scriptsize.}}}%
                     }

\begin{document}

\title[Resolvent estimates]{Resolvent estimates for the Schr\"odinger operator 
with $L^\infty$ electric and magnetic potentials and applications to the local energy decay}

\author[A. Larra\'in-Hubach, J. Shapiro, G. Vodev]{Andr\'es Larra\'in-Hubach, Jacob Shapiro and Georgi Vodev}

\address {Department of Mathematics, University of Dayton, Dayton, OH 45469-2316, USA}
\email{alarrainhubach1@udayton.edu}

\address {Department of Mathematics, University of Dayton, Dayton, OH 45469-2316, USA}
\email{jshapiro1@udayton.edu}

\address {Universit\'e de Nantes, Laboratoire de Math\'ematiques Jean Leray, 2 rue de la Houssini\`ere, BP 92208, 44322 Nantes Cedex 03, France}
\email{Georgi.Vodev@univ-nantes.fr}

\date{}
\begin{abstract} We extend the results in \cite{kn:LSV} to a larger class of electric potentials
$V\in L^\infty(\R^d;\R)$, $d\ge 3$, and magnetic potentials $b\in L^\infty(\R^d;\R^d)$ such that 
$V(x), b(x)=O_k\left(|x|^{-k}\right)$, $|x|\gg 1$, for every integer $k$. More precisely, we prove estimates for the derivatives of the 
weighted resolvent of the 
corresponding magnetic Schr\"odinger operator, which are uniform with respect to both the spectral parameter and
the order of derivation. We also show that these resolvent estimates still hold for the
Dirichlet self-adjoint realization of the Schr\"odinger operator in the exterior of a non-trapping obstacle in $\R^d$, $d\ge 2$,
provided the magnetic potential is supposed identically zero. As an application of these resolvent estimates, 
we obtain the rate of decay of the local energy of solutions to the corresponding wave equation.
In particular, we show that for potentials satisfying $|V(x)|+|b(x)|\le Ce^{-c|x|^s}$, $c,C>0$, $0<s<1$,
the rate of decay of the local energy is $e^{-c_0t^s}$ with some constant $c_0>0$, where $t\gg 1$ is the time variable.

Key words: Schr\"odinger operator, electric and magnetic potentials, resolvent estimates, local energy decay.

\end{abstract}

\maketitle

\setcounter{section}{0}

\section{Introduction} \label{introduction section}
Our goal in this paper is to extend the recent results in \cite{kn:LSV} to a larger class of electric and magnetic potentials. 
Let $\cal{O}\subseteq\mathbb{R}^d$, $d\ge 2$, be a possibly empty, bounded domain with smooth boundary such that 
$\Omega=\mathbb{R}^d\setminus\cal{O}$ is connected. In \cite{kn:LSV}, the magnetic Schr\"odinger operator
\begin{equation} \label{eq:1.1}
P=(i\nabla+b(x))^2+V(x) : L^2(\Omega) \to L^2(\Omega),
\end{equation}
is studied under various conditions on the obstacle $\cal{O}$, the magnetic potential $b\in L^\infty(\Omega; \mathbb{R}^d)$, 
and the electric potential $V\in L^\infty(\Omega;\mathbb{R})$. It is supposed in \cite{kn:LSV}, and we continue to assume in this work that
 either

a) $\cal{O} = \emptyset$ (so that $\Omega = \R^d$), $d\ge 3$, and $b$ is not identically zero,

\noindent
or

b) $\cal{O}$ is not empty and $b\equiv 0$. 

In \cite{kn:LSV} the local energy decay for the solution to the wave equation
\begin{equation} \label{eq:1.2}
\begin{cases}
(\partial^2_t + P)u(t,x) = 0 & \text{in } \R \times \Omega, \\
u(t,x) = 0 & \text{on } \R \times \partial \Omega, \\
u(0,x) = f_1(x), \, \partial_tu(0,x) = f_2(x) & \text{in } \Omega,  
\end{cases}
\end{equation}
is studied for potentials satisfying
\begin{equation}\label{eq:1.3}
|V(x)|+|b(x)|\le Ce^{-c\langle x\rangle},
\end{equation}
where $\langle x \rangle \defeq (1 + |x|^2)^{1/2}$ and $C,c>0$ are some constants.
In what follows $\|\cdot\|$ and $\|\cdot\|_1$ denote the operator norms $L^2(\Omega)\to L^2(\Omega)$ and $H^1(\Omega)\to L^2(\Omega)$, respectively. 

Denote by $\widetilde{P}$ the self-adjoint Dirichlet realization of $-\Delta$ on the Hilbert space $L^2(\Omega)$. Let $\chi \in C^\infty(\mathbb{R}^d; [0,1])$ be compactly supported and equal to $1$ near $\overline{\mathcal{O}}$. Define the multiplication operator on $L^2(\Omega)$ by $u \mapsto \chi|_\Omega u$.

In case b), we assume $\mathcal{O}$ is \textit{non-trapping}. More precisely, the operator $\widetilde{P}$ fits into the framework of a \textit{black box Hamiltonian} in the sense of Sjöstrand and Zworski \cite{kn:SZ91}, as defined in \cite[Definition 4.1]{kn:dz}. Under this setup, the cutoff resolvent $\chi (\widetilde{P} - \lambda^2)^{-1} \chi : L^2(\Omega) \to D(\widetilde{P})$ admits a meromorphic continuation from the lower half-plane $\{\operatorname{Im} \lambda < 0\}$ to the entire complex plane $\mathbb{C}$ when $d$ is odd, and to the Riemann surface of the logarithm when $d$ is even, by the analytic Fredholm theorem \cite[Theorem 4.4]{kn:dz}. The poles of this continuation are referred to as its \textit{resonances}.

Our non-trapping assumption takes the form of the following high-frequency resolvent estimate: there exist constants $C > 0$ and $\lambda_0 > 0$ such that for all $\lambda \ge \lambda_0$,
\begin{equation}\label{eq:1.4}
\left\|\chi(\widetilde{P} - \lambda^2)^{-1} \chi\right\| \le C \lambda^{-1}.
\end{equation}
We note that (\ref{eq:1.4}) also holds for $0<\lambda\le\lambda_0$ for an arbitrary obstacle, see \cite[Appendix B]{kn:B1}.

In what follows $P$ denotes the self-adjoint realization of the operator $(i\nabla+b)^2+V$ on the Hilbert space
$L^2(\mathbb{R}^d)$ in the case a), and the Dirichlet self-adjoint realization of $-\Delta+V$ in the case b). For the case a), see \cite[Appendix A]{kn:LSV} for the construction of $P$ via a sesquilinear form. For the case b), we recall that the domain of $P$ is the intersection $H^1_0(\Omega) \cap H^2(\Omega)$ of Sobolev spaces. In both cases we suppose that $P\ge 0$, for which a sufficient condition is $V\ge 0$. 
Then the solution to the wave equation (\ref{eq:1.2}) can be expressed by the formula
\begin{equation}\label{eq:1.5}
u(t)=\cos\left(t\sqrt{P}\right)f_1+P^{-1/2}\sin\left(t\sqrt{P}\right)f_2.
\end{equation}
For short-range electric and magnetic potentials, it follows from the resolvent estimates proved in \cite{kn:LSV} 
 (see Theorem \ref{4.1}) and the limiting absorption principle (which we review in Appendix \ref{limiting absorption appendix}) that the limit
\begin{equation*}
(P-\lambda^2)^{-1}=\lim_{\varepsilon\to 0}(P-(\lambda-i\varepsilon)^2)^{-1}: \langle x\rangle^{-s}H^{-1}(\mathbb{R}^d)\to \langle x\rangle^{s}H^1(\mathbb{R}^d),\quad s>\frac{1}{2},
\end{equation*}
 exists for all $\lambda\in\R$, $\lambda\neq 0$. Our goal in this paper is to study the operator-valued function 
 $(P-\lambda^2)^{-1}$ for electric and magnetic potentials satisfying
\begin{equation}\label{eq:1.6}
|V(x)|+|b(x)|\le C\Theta(c\langle x\rangle),
\end{equation}
where $C,c>0$ are some constants and the function $\Theta\in C^2([0, \infty); (0, \infty))$ is decreasing such that 
$\Theta(r)=O_k((r+1)^{-k})$ for every integer $k\ge 0$. We suppose that the first and the second derivatives of $\Theta$ satisfy
\begin{equation}\label{eq:1.7}
\left|\partial_r^j\Theta(r)\right|\le\widetilde C\Theta(r),\quad\forall r\ge 0,\quad j=1,2,
\end{equation}
with some constant $\widetilde C>0$.
We also suppose that there are constants $C_1,C_2>0$ such that
the function $\Theta$ satisfies the inequality
\begin{equation}\label{eq:1.8}
\Theta(r_1)\Theta(r_2)\le C_1\Theta(C_2(r_1+r_2)),\quad\forall r_1,r_2>0.
\end{equation}
Set 
\begin{equation*}
\widetilde m_k=\sup_{r\ge 0}\,(r+1)^{k}\sqrt{\Theta(r)}.
\end{equation*}
Let $m=\{m_k\}_{k=0}^\infty$, $m_0=1$, be an increasing sequence satisfying the inequalities
\begin{equation}\label{eq:1.9}
k!+\widetilde m_k+m_{k+2}+\sqrt{m_{2k}}\le C_3^{k+1}m_k,
\end{equation}
\begin{equation}\label{eq:1.10}
\frac{m_\nu m_{k-\nu}}{\nu!(k-\nu)!}\le C_4\frac{m_k}{k!},\quad 0\le\nu\le k,
\end{equation}
for all integers $k\ge 0$ with constants $C_3,C_4>0$ independent of $k$.

Our main example of a function $\Theta$ satisfying the conditions 
(\ref{eq:1.7}), (\ref{eq:1.8}), (\ref{eq:1.9}) and (\ref{eq:1.10}) is $\Theta(r)=e^{-(r+1)^s}$ with $0<s\le 1$. Indeed, we have
\begin{equation*}
\widetilde m_k=\sup_{r\ge 0}\,(r+1)^{k}e^{-(r+1)^s/2}=2^{k/s}\sup_{\sigma>0}\,\sigma^{k/s}e^{-\sigma}
 = (k/s)^{k/s} e^{-k/s} 
\end{equation*}
 Stirling's formula says that $k! \sim \sqrt{2 \pi k } (k/e)^k$ as $k \to \infty$, so 
\begin{equation*}
(k/s)^{k/s} e^{-k/s} \sim s^{-k/s} (2\pi k)^{-1/2s} (k!)^{1/s}.
\end{equation*}
Therefore, in this case we can take $m_k=(k!)^{1/s}$. A wider class of admissible functions is $\Theta(r) = e^{-(r +1)^s(\log(r + e))^\beta}$ for $0 < s \le 1$ and $\beta > 0$, for which $m_k = (k!)^{1/s} (\log(k +e))^{-\beta k /s}$.

Set $\mu(x)=\sqrt{\Theta(c\langle x\rangle)}$. To define the operator-valued function 
 $\mu(P-\lambda^2)^{-1}\mu:L^2\to L^2$ at $\lambda=0$, we need to suppose that zero is neither 
an eigenvalue nor a resonance of $P$.
More precisely, we need to suppose that 
there are constants $C>0$ and $0<\delta_0\ll 1$ so that 
the low-frequency resolvent bound 
\begin{equation}\label{eq:1.11}
\sum_{\ell=0}^1\left\|\mu\nabla^\ell(P-\lambda^2\pm i\varepsilon)^{-1}\mu\right\|\le C,\quad 0<\lambda\le\delta_0,
\end{equation}
holds uniformly in $0<\varepsilon\le 1$. The estimate \eqref{eq:1.11} is established in \cite[Section 5]{kn:LSV} for $d\ge 5$, provided $V$ is nonnegative and $|V(x)| + |b(x)| \le \langle x \rangle^{-\rho}$ with $\rho > \max(3, d/2)$. Our main result is the following

\begin{Theorem} \label{1.1}
Assume the conditions \eqref{eq:1.4}, \eqref{eq:1.6}, \eqref{eq:1.7}, \eqref{eq:1.8}, (\ref{eq:1.9}) and \eqref{eq:1.10} are fulfilled. 
Let $\ell\in\{0,1\}$. Then, given any $\delta>0$ there is a constant $C=C_\delta>0$ so that the estimate 
\begin{equation}\label{eq:1.12}
\left\|\frac{d^k}{d\lambda^k}\mu\nabla^{\ell}(P-\lambda^2)^{-1}\mu\right\|\le C^{k+1}m_k(|\lambda|+1)^{\ell-1}
\end{equation}
holds for all integers $k\ge 0$ and all $\lambda\in\R$, $|\lambda|\ge\delta$. 
 If the dimension $d$ is odd and the condition
 (\ref{eq:1.11}) is assumed, then the estimate (\ref{eq:1.12}) holds for all $\lambda\in\R$.
 \end{Theorem}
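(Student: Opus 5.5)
We plan to reduce everything to the weighted resolvent bounds already available from \cite{kn:LSV} (Theorem \ref{4.1}), which give, for polynomial weights $\langle x\rangle^{-s}$, $s>1/2$, the estimate $\|\langle x\rangle^{-s}\nabla^\ell(P-\lambda^2)^{-1}\langle x\rangle^{-s}\|\le C(|\lambda|+1)^{\ell-1}$ for $|\lambda|\ge\delta$. The task is then to control $k$ derivatives in $\lambda$. The device we would use is a resolvent identity that splits off the potential part: write $P=P_0+W$, where $P_0$ is the free operator ($-\Delta$ in case a), $\widetilde P$ in case b)) and $W=2ib\cdot\nabla+i(\nabla\cdot b)+|b|^2+V$, understood in the form sense since $b\in L^\infty$ only. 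Write $R(\lambda)=(P-\lambda^2)^{-1}$ and $R_0(\lambda)=(P_0-\lambda^2)^{-1}$. We would use the identity
\[
R=R_0-R_0WR_0+R_0WRWR_0 .
\]
The point is that $W$ carries the decaying factor $\Theta$. By (\ref{eq:1.8}), $\Theta$ can be split as a product $\mu\cdot\mu$ up to constants, so every occurrence of $W$ can be written as $\mu\,\widetilde W\,\mu$ with $\widetilde W$ bounded (first order).

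\textbf{Free resolvent derivatives.} In case a) the kernel of $R_0(\lambda)$ is explicit in odd dimensions, $e^{-i\lambda|x-y|}|x-y|^{2-d}\times$(a polynomial in $\lambda|x-y|$), and in general dimension it is given by Hankel functions. Differentiating $k$ times in $\lambda$ produces a factor $|x-y|^k\le(\langle x\rangle+\langle y\rangle)^k$. The weights $\mu(x)\mu(y)$ then absorb this factor at cost $\widetilde m_k\,2^k$, by the definition of $\widetilde m_k$ together with (\ref{eq:1.8}). This gives
\[
\|\partial_\lambda^k\mu\nabla^\ell R_0\mu\|\le C^{k+1}\widetilde m_k(|\lambda|+1)^{\ell-1},
\]
with the $(|\lambda|+1)^{\ell-1}$ factor coming from the standard Agmon-type/limiting-absorption argument for the free operator, uniformly for $|\lambda|\ge\delta$.

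In case b) we cannot use an explicit kernel. Instead we would combine the non-trapping estimate (\ref{eq:1.4}) with a gluing of $R_0^{free}$ away from the obstacle, and the cutoff part inside. Derivatives of the cutoff part are handled by Cauchy estimates in a strip $|\mathrm{Im}\,\lambda|\le c$, which is resonance-free for a non-trapping obstacle with bound $C\lambda^{-1}e^{C|\mathrm{Im}\lambda|}$; these yield $C^kk!$ growth, which is admissible by (\ref{eq:1.9}).

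\textbf{Leibniz step for the full resolvent.} We differentiate the identity $k$ times. The first two terms $R_0$ and $R_0WR_0$ are fine by the free bounds and (\ref{eq:1.10}). For the third term we need $\partial^\nu(\mu R\mu)$, which creates a recursion. We would avoid it by instead working with the equation obtained from
\[
\mu R\mu=\mu R_0\mu-\mu R_0\mu\,\widetilde W\,\mu R\mu .
\]
Differentiating $k$ times and using (\ref{eq:1.10}), we would prove by induction the bound $\|\partial^k\mu R\mu\|\le C^{k+1}m_k$, with the constant chosen large. The base case $k=0$ is Theorem \ref{4.1} with $\mu\le C\langle x\rangle^{-s}$. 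At step $k$ the term $\nu=0$ (all derivatives on $\mu R\mu$) reappears on the right-hand side. To handle it we invert $I+\mu R_0\mu\widetilde W$, whose inverse equals $I-\mu R\mu\widetilde W$ and is therefore bounded by the $k=0$ case. The remaining terms give
\[
\sum_{\nu\ge1}\binom{k}{\nu}C^{\nu+1}\widetilde m_\nu\, C^{k-\nu+1}m_{k-\nu}\le C_4\,C^{k+1}m_k\sum_{\nu\ge1}(C'/C)^{\nu},
\]
which closes the induction for $C$ large. The first-order factor $\nabla$ in $\widetilde W$ is absorbed using the $\ell=1$ bounds, with $m_{k+2}$ from (\ref{eq:1.9}) covering the slight loss from first-order terms and commutators of $\mu$ with $\nabla$, which are controlled by (\ref{eq:1.7}). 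The case $\ell=1$ follows the same way after applying $\nabla$ on the left.

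\textbf{Low frequencies and main obstacle.} When $d$ is odd and (\ref{eq:1.11}) holds, $R_0$ is entire in $\lambda$ with weighted kernel bounds near $0$. For $|\lambda|\le\delta_0$ the same induction works, with the base case supplied by (\ref{eq:1.11}). The main obstacle we expect is the uniform free estimate $\|\partial_\lambda^k\mu\nabla^\ell R_0\mu\|\le C^{k+1}\widetilde m_k(|\lambda|+1)^{\ell-1}$, which must hold with the sharp high-frequency decay and without losing an extra $|\lambda|^k$. We would first try to prove it by a weighted Mourre/commutator argument, writing $\partial_\lambda R_0=2\lambda R_0^2$ and using the phase $e^{-i\lambda|x-y|}$ to trade each derivative for a factor $|x-y|$ rather than a power of $\lambda$.
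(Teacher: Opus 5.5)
\textbf{The gap: the free resolvent estimate.} You flag $\|\partial_\lambda^k\mu\nabla^\ell R_0\mu\|\le C^{k+1}m_k(|\lambda|+1)^{\ell-1}$ as the main obstacle and leave it unproved, yet the whole argument rests on it. Your proposed method cannot give it.

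Differentiating the explicit kernel and applying Schur's test fails. In odd $d$, \eqref{eq:2.13} gives $|\partial_\lambda^kK_0|\lesssim|x-y|^{k+2-d}\big((|\lambda||x-y|)^{\frac{d-3}{2}}+(k+1)^{\frac{d-3}{2}}\big)$. This is at best uniform in $\lambda$ (when $d=3$), grows like $|\lambda|^{\frac{d-3}{2}}$ for $d\ge5$, and gains one more power with $\nabla$. You need decay $|\lambda|^{-1}$. The loss does not come from the derivatives, which already produce only powers of $|x-y|$. It comes from estimating the oscillatory kernel in absolute value.

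The limiting absorption principle gives the decay only for $k=0$, and offers no way to combine it with $k$ derivatives. A Mourre-type bound on $R_0^{k+1}$ (from $\partial_\lambda R_0=2\lambda R_0^2$) needs weights $\langle x\rangle^{-s}$ with $s>k+\frac12$, and its constants are not controlled in $k$. So it will not produce $C^{k+1}m_k$.

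The missing idea is the representation \eqref{eq:2.2}, $\lambda(P_0-\lambda^2)^{-1}=i\int_0^\infty e^{-it\lambda}\cos(t\sqrt{P_0})\,dt$. Each $\lambda$-derivative becomes a factor $-it$, and $|e^{-it\lambda}|=1$ makes the bound uniform in real $\lambda$ for free. The factor $t^k$ is absorbed by the weights, since $\mu\cos(t\sqrt{P_0})\mu=O(\sqrt{\Theta(ct)})$ away from $\{|x|,|y|\lesssim t\}$. Inside that region the kernel vanishes by Huygens' principle ($d$ odd), or equals $t^{-d}W(|x-y|/t)$ and is handled by a contour deformation in $t$ ($d$ even). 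Dividing by $\lambda$ gives the factor $(|\lambda|+1)^{-1}$. The $\nabla$ cases then follow from the identity with $(P_0-i\theta^2)^{-1}$, $\theta=\theta_0+|\lambda|$, together with Lemma \ref{2.2}.

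\textbf{The perturbative step and case b).} Granting the free estimate, your perturbative step is a legitimate alternative to the paper's. The paper uses a Neumann series on $|\lambda-z|\le\gamma$ with $z$ frozen, as in \eqref{eq:4.16}. You instead invert globally via the exact inverse $I-\mu R\mu\widetilde W$, which Theorem \ref{4.1} bounds. This works because Lemma \ref{3.2} needs only a bound on the inverse, not smallness.

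It does need three repairs. With $b\in L^\infty$, the term $ib\cdot\nabla$ leaves a bare $\nabla$ at the right end of $\mu R_0\mu\widetilde W$, so that operator is unbounded on $L^2$. You must therefore:
\begin{itemize}
\item work with a two-component system, for example the unknowns $\mu R\mu$ and $\mu^{-1}ib\cdot\nabla R\mu$ as in \eqref{eq:4.6};
\item run the induction on both unknowns simultaneously;
\item conjugate by $\mathrm{diag}(\lambda,1)$, because the entries of the system and of its inverse have sizes $|\lambda|^{-1}$, $1$ and $|\lambda|$ (the last from $\mu^{-1}b\cdot\nabla R\nabla\cdot b\mu^{-1}$).
\end{itemize}

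In case b), taking $\widetilde P$ as the reference operator reduces, via gluing, to the same free $\R^d$ estimate, so it inherits the gap. It also needs a resonance-free strip derived from \eqref{eq:1.4} as an extra input. The paper avoids differentiating the obstacle resolvent altogether through \eqref{eq:4.28}--\eqref{eq:4.29}. There the only $\lambda$-dependence besides the free resolvent is the factor $\lambda^2-z^2$, which is small for $|\lambda-z|\le\gamma$.
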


Let $\mathcal{C}_m^\infty(\R)$ be the set of all functions $a\in C^\infty(\R)$ satisfying
\begin{equation*}
\left|\partial_\lambda^ka(\lambda)\right|\le C^{k+1}m_k,\quad\forall\lambda\in\R,\,\,\forall k\in\mathbb{N},
\end{equation*}
 with a constant $C>0$ independent of $k$ and $\lambda$.
 If $\Theta(r)=e^{-(r+1)^s}$ with $0<s\le 1$, we have
$m_k=(k!)^{1/s}$, so in this case $\mathcal{C}_m^\infty(\R)$ is a subspace of the 
Gevrey functions $G^s(\R)$ if $s<1$ and of the analytic functions if $s=1$. 

As a consequence of Theorem \ref{1.1} we obtain the rate of the decay
of the local energy of the solution to the wave equation (\ref{eq:1.2}). 
We suppose that there exists a function $\zeta\in \mathcal{C}_m^\infty(\R)$, $0\le\zeta\le 1$, $\int_{\R}\zeta(\sigma)d\sigma=1$,
$\zeta(\sigma)=0$ near $(-\infty, 1/2] \cup [2, \infty)$. When $\Theta(r)=e^{-(r+1)^s}$, such a function exists if $0<s<1$ (see Appendix \ref{gevrey cutoff appendix} for a brief construction), but it does not if $s=1$. Set
\begin{equation*}
\psi(\lambda)=\int_{-\infty}^\lambda\zeta(\sigma)d\sigma.
\end{equation*}
Given any $t>1$, we let $k(t)$ be the biggest
integer $k$ such that $m_k^{1/k}\le t$. 
When $\Theta(r)=e^{-(r+1)^s}$ with $0<s\le 1$, it is easy to see that $k(t)\sim c_1t^s$ with some constant $c_1>0$.
We have the following 

\begin{Theorem} \label{1.2}
Assume the conditions \eqref{eq:1.4}, \eqref{eq:1.6}, \eqref{eq:1.7}, \eqref{eq:1.8}, (\ref{eq:1.9}) and \eqref{eq:1.10} are fulfilled. 
Then, given any $\delta>0$ the estimates 
 \begin{equation}\label{eq:1.13}
\left\|\mu\cos(t\sqrt{P})\psi(P^{1/2}/\delta)\mu\right\|+\sum_{\ell=0}^1
\left\|\mu\nabla^\ell P^{-1/2}\sin(t\sqrt{P})\psi(P^{1/2}/\delta)\mu\right\|\le C_0e^{-k(c_0t)},
\end{equation}
\begin{equation}\label{eq:1.14}
\left\|\mu P^{1/2}\sin(t\sqrt{P})\psi(P^{1/2}/\delta)\mu\right\|_1+\sum_{\ell=0}^1
\left\|\mu\nabla^\ell \cos(t\sqrt{P})\psi(P^{1/2}/\delta)\mu\right\|_1\le C_0e^{-k(c_0t)},
\end{equation}
 hold for $t>1$ with constants $c_0,C_0>0$ depending on $\delta$ but independent of $t$. If the dimension $d$ is odd and the condition
 (\ref{eq:1.11}) is assumed, then the estimates (\ref{eq:1.13}) and (\ref{eq:1.14}) hold with $\psi\equiv 1$. 
 \end{Theorem}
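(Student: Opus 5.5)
Theorem \ref{1.2} follows from Theorem \ref{1.1} through Stone's formula and repeated integration by parts in $\lambda$. We treat the first term of (\ref{eq:1.13}); the others are analogous. By the spectral theorem and the limiting absorption principle (Appendix \ref{limiting absorption appendix}) we write
\begin{equation*}
\mu\cos(t\sqrt{P})\psi(P^{1/2}/\delta)\mu=\frac{1}{\pi i}\int_0^\infty \cos(t\lambda)\,\psi(\lambda/\delta)\,\mu\left((P-(\lambda-i0)^2)^{-1}-(P-(\lambda+i0)^2)^{-1}\right)\mu\,\lambda\,d\lambda .
\end{equation*}
Since $P$ is real in case b), and since $(P-(\lambda+i0)^2)^{-1}=(P-(-\lambda-i0)^2)^{-1}$ in general, we rewrite this as $\frac{1}{\pi i}\int_{\R}\cos(t\lambda)\lambda\psi(|\lambda|/\delta)\,\mu R(\lambda)\mu\,d\lambda$ with $R(\lambda)=(P-(\lambda-i0)^2)^{-1}$ the outgoing resolvent. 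Setting $F(\lambda)=\lambda\psi(|\lambda|/\delta)\mu R(\lambda)\mu$, $F$ vanishes for $|\lambda|\le\delta/2$ and is smooth there. Theorem \ref{1.1} with constant $C_{\delta/2}$, combined with the Leibniz rule, the bound $|\partial^j\psi|\le C^{j+1}m_j$ (from $\zeta\in\mathcal C^\infty_m$), and (\ref{eq:1.10}), gives
\begin{equation*}
\|\partial_\lambda^k F(\lambda)\|\le C^{k+1}m_k
\end{equation*}
on its support. The factor $\lambda$ is harmless: it contributes only one extra term $k\,\partial^{k-1}$ of the resolvent, which is controlled by $m_k$ because $m$ is increasing, and it costs a factor $|\lambda|(|\lambda|+1)^{-1}\le 1$. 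The extra factor $k$ is absorbed into $C^{k+1}$.

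\textbf{Main obstacle.} The difficulty is that $F$ does not decay in $\lambda$, so the integral converges only weakly, and $k$ integrations by parts against $e^{\pm it\lambda}$ give $t^{-k}$ times $\int\partial^kF$, which is not absolutely integrable. To fix this we insert decay. For the $\cos$ term we pair with $f,g\in L^2$, or equivalently we write $\cos(t\lambda)=\mathrm{Re}\, e^{it\lambda}$ and use the resolvent identity
\begin{equation*}
R(\lambda)=\lambda^{-2}\left(-I+R(\lambda)P\right).
\end{equation*}
The cleaner route is to first prove the bound for $P^{-1/2}\sin(t\sqrt P)$, whose symbol carries $\lambda^{-1}\cdot\lambda=1$, and then take derivatives in $t$. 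Concretely, we multiply by $(\lambda^2+1)^{-1}$ using $(P+1)$. Writing $\mu R\mu=\mu(P+1)^{-1}\mu+(\lambda^2+1)\mu R(\lambda)(P+1)^{-1}\mu$ is not a weighted-resolvent identity, so instead we use
\begin{equation*}
R(\lambda)=(P+1)^{-1}+(\lambda^2+1)(P+1)^{-1}R(\lambda),
\end{equation*}
iterated twice. The constant terms $\int e^{it\lambda}\lambda\psi\,d\lambda$ are oscillatory integrals of $\mathcal C_m$ symbols, which are handled directly. The commutator with $\mu$ is controlled by (\ref{eq:1.7}) and the $\ell=1$ case of Theorem \ref{1.1}. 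Alternatively, and more simply, we work in the distributional sense: the operator equals $\hat F(t)$, and we bound $t^k\hat F(t)=\widehat{(i\partial)^kF}$, where $\partial^kF$ is bounded by $C^{k+1}m_k(|\lambda|+1)^{-1}$ for the $\ell=0$ estimate after dividing by one power of $\lambda$ (the sine case). For $\ell=0$ the theorem gives $\|\partial^kF\|\le C^{k+1}m_k$ for the sine kernel $\psi\,\mu R\mu$, and the extra $(|\lambda|+1)^{-1}$ together with one further integration by parts yields an $L^1$ bound. In either form the conclusion is
\begin{equation*}
\|\cdot\|\le C\,(Ct)^{-k}m_{k+2}\le C'\,(C't)^{-k}m_k
\end{equation*}
by (\ref{eq:1.9}).

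\textbf{Optimizing in $k$.} Choosing $k=k(c_0t)$ gives $m_k\le(c_0t)^k$, hence $(C't)^{-k}m_k\le (C'/c_0)^{-k}$. Taking $c_0$ small so that $c_0C'\le e^{-1}$ yields the bound $e^{-k(c_0t)}$.

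\textbf{Remaining estimates.} The $\nabla$ terms use $\ell=1$ in Theorem \ref{1.1}. The growth $(|\lambda|+1)^{\ell-1}$ is compensated by measuring in the $H^1\to L^2$ norm $\|\cdot\|_1$ in (\ref{eq:1.14}), through ellipticity: there $\lambda$ times $\mu R\mu$ composed with $(P+1)^{-1/2}$ is controlled. When $d$ is odd and (\ref{eq:1.11}) holds, the same argument with $\psi\equiv1$ applies, since Theorem \ref{1.1} then holds uniformly down to $\lambda=0$ and $\lambda R(\lambda)$ is smooth across $0$.
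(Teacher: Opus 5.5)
Your argument has a genuine gap, and it sits exactly at the ``main obstacle'' you identify: none of your proposed fixes produces an absolutely convergent $\lambda$-integral for the operator norms in \eqref{eq:1.13}--\eqref{eq:1.14}.

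Theorem~\ref{1.1} controls all derivatives with the \emph{same} $\lambda$-profile $(|\lambda|+1)^{\ell-1}$ for every $k$. Write $R^\mp(\lambda)=(P-(\lambda\mp i0)^2)^{-1}$. The Stone-formula integrand for $\mu\cos(t\sqrt P)\psi\mu$ is $\lambda\psi\,\mu(R^--R^+)\mu$. The integrand for $\mu\nabla P^{-1/2}\sin(t\sqrt P)\psi\mu$ is $\psi\,\mu\nabla(R^--R^+)\mu$. These and all their $\lambda$-derivatives are only $O(C^{k+1}m_k)$. For $\mu P^{-1/2}\sin(t\sqrt P)\psi\mu$ they are $O(C^{k+1}m_k(|\lambda|+1)^{-1})$, which is still not integrable.

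\begin{itemize}
\item One more integration by parts buys a factor $t^{-1}$, never a factor $\lambda^{-1}$. So the claim that it ``yields an $L^1$ bound'' is false.
\item The identity $R(\lambda)=(P+1)^{-1}+(\lambda^2+1)(P+1)^{-1}R(\lambda)$ puts a growing factor in front.
\item The identity $R(\lambda)=\lambda^{-2}(-I+R(\lambda)P)$ gains $\lambda^{-2}$ only by letting $P$ fall on $\mu f$. That gives $H^2\to L^2$ bounds, not the $L^2\to L^2$ bounds of \eqref{eq:1.13}. Interpolating with the trivial bound does not recover the $L^2$ endpoint.
\item In \eqref{eq:1.14}, the $H^1\to L^2$ norm buys at best one power of $\lambda^{-1}$. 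That only brings the integrand for $\mu\nabla\cos(t\sqrt P)\psi\mu$ back to $O(1)$.
\end{itemize}

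For the single term $\mu P^{-1/2}\sin(t\sqrt P)\psi\mu$ you could rescue the argument by cutting at frequency $\Lambda=e^{k}$. Indeed $\|P^{-1/2}\mathbf{1}_{\sqrt P\ge\Lambda/2}\|\le 2\Lambda^{-1}$, and the truncated integral costs only a factor $\log\Lambda=k$. However, the high-frequency parts of the cosine and gradient terms are not small in norm, so this does not extend.

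The missing idea is to abandon pointwise-in-time spectral integrals. The paper first proves time-integrated bounds, Proposition~\ref{integral estimates}, of the form $\int_t^\infty\|\cdots f\|^2\,dt'\le C^{2k+2}m_k^2t^{-2k}\|f\|^2$, adapted from \cite{kn:LSV}. It then converts them into pointwise bounds through a weighted local energy argument.

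For $u(t)=\sin(t\sqrt P)P^{-1/2}\psi_\delta(P^{1/2})\mu f$, the time derivative of $\|\mu\partial_tu\|^2+\|\mu(i\nabla+b)u\|^2+\|\mu u\|^2$ is bounded by $\|\mu\partial_tu\|^2+\|\mu u\|^2+\|\mathcal N(\mu)u\|^2$. Here $\mathcal N(\mu)=\sum_{\ell}O_\ell(\mu)\nabla^\ell$ arises from commuting the weight with $P$ and is controlled via \eqref{eq:1.7}. The $k=0$ integrated bound supplies a sequence $T_j\to\infty$ along which the weighted energy vanishes. Integrating from $t$ to $T_j$ then bounds the weighted energy at time $t$ by $\int_t^\infty$ of exactly the quantities in Proposition~\ref{integral estimates}.

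Since $\mu\partial_tu=\mu\cos(t\sqrt P)\psi_\delta\mu f$, this gives the $L^2\to L^2$ cosine bound with no decay in $\lambda$ ever needed. Estimate \eqref{eq:1.14} follows the same way from $u(t)=\cos(t\sqrt P)\psi_\delta\mu f$ with $H^1$ data. Once a bound $C^{k+1}m_kt^{-k}$ is available, your final optimization in $k$ matches the paper's.
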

 
The present paper extends the exponential local energy decay results obtained in \cite{kn:LSV} to a broader class of electric and magnetic potentials. In \cite{kn:LSV}, the coefficients were assumed to decay exponentially, which permitted analytic continuation of the weighted resolvent, weighted by exponentially decaying weights, and led to exponential decay of the local energy. In this work we replace the exponential weight by the more general weight $\mu(x) = \sqrt{\Theta(c \langle x \rangle)}$, where the function $\Theta$ satisfies the conditions \eqref{eq:1.7} through \eqref{eq:1.10}; note in particular the conditions on the sequence ${m_k}$ given by \eqref{eq:1.9} and \eqref{eq:1.10}. The condition \eqref{eq:1.9} is used in Section \ref{free resolv est section} to obtain the uniform bounds \eqref{eq:2.1} on derivatives of the free weighted resolvent.  The condition \eqref{eq:1.10} ensures stability of these bounds under products and inverses of operator-valued functions, which is used in Section \ref{perturbed resolvent section} for passing from the free resolvent to the perturbed resolvent. The decay rate in time is then determined by the growth of $m_k$ via the above function $k(t)$.

We note that for each $\delta > 0$, cutoff function appearing \eqref{eq:1.13} and \eqref{eq:1.14} is independent of time. This contrasts with the case of exponentially decaying coefficients treated in \cite{kn:LSV}. In that setting, the argument requires bounds on the derivatives of the cutoff of the form $C^{k+1}_\delta k!$ for all all orders $k \le m(t)$, where $C_\delta > 0$ depends on $\delta$ but is independent of $t$, and where the maximum order $m(t)$ of differentiation satisfies $m(t) \to \infty$ as $t \to \infty$. The order $m(t)$ must be finite for each $t$, since there are no nontrivial analytic functions which vanish on an open set.

This paper fits into a long line of work on decay of solutions to the wave equation. We begin by recalling the classical results for the free wave equation, where both $b$ and $V$ are identically zero. If $\mathcal{O} = \emptyset$, the sharp Huygen's principle implies that when $d \ge 3$ is odd, the energy of the solution within any fixed compact set decays to zero in finite time. In even dimensions, the Poisson formula for the solution to the initial value problem implies that the local energy decays like $t^{-d}$. 

When $\mathcal{O} \neq \emptyset$ ($b$ and $V$ still vanishing), the decay of local energy for solutions to \eqref{eq:1.2} is related to the dynamics of the underlying Hamiltonian flow. The non-trapping condition, where all geodesics escape to infinity, is well known to be related to rapid energy decay.  This condition holds for specific geometries, such as convex obstacles and more generally for obstacles where an escape function can be constructed. Foundational works by Lax, Morawetz, and Phillips, Ralston, and Strauss \cite{kn:lmp, kn:M1, kn:M2, kn:MRS} address such scenarios and the resulting decay, utilizing multiplier methods and related properties of the resolvent. In the present work, as well as in \cite{kn:LSV}, the nontrapping assumption is formulated as in \eqref{eq:1.4}.

When nonzero potentials are present, results on energy decay draw from the work of Vainberg \cite{kn:Va} and Melrose-Sj\"ostrand \cite{kn:MS1, kn:MS2}. For example, in the case of a smooth, nonnegative electric potential of compact support, the local energy decay like $O(e^{-Ct})$ for some $C > 0$ when $d \ge 3$ is odd, and like $O(t^{-d})$ when $d \ge 2$ is even. Vainberg \cite{kn:Va, kn:Va1} establishes these decay rates for compactly supported perturbations of the Laplacian satisfying the Generalized Huygens Principle (as defined in \cite{kn:V2a}). Work by Melrose and Sj\"ostrand on the propagation of singularities \cite{kn:MS1, kn:MS2} shows this principle holds for broad class of smooth nontrapping perturbations of the Laplacian. Recent works on energy decay for the wave equation in nontrapping settings are \cite{kn:V1, kn:B1, kn:V1a, kn:V2, kn:BB, kn:CDY, kn:llst25}.


\textbf{Future directions:} A natural direction is to unify the cases a) and b). Achieving this would likely require establishing that Theorem \ref{4.1} holds in the unified setting. We note that if $d = 2$, $\Omega = \emptyset$, and $b$ is nontrivial, \eqref{eq:4.2} holds for $\lambda$ large enough \cite[Theorem 1.1]{kn:MPS}, see also \cite{kn:V3, kn:V5}.  The case of a nontrivial magnetic potential in the presence of a nontrapping obstacle appears to be more delicate. In \cite{kn:LSV}, a resolvent remainder argument is used to transfer the high-frequency bound \eqref{eq:1.4} to the perturbed resolvent; however, this argument does not extend to first-order perturbations. Works relevant to addressing this difficulty include \cite{kn:EGS, kn:CDL, kn:CDL2}.

For medium frequencies, in \cite{kn:LSV}, in the case of an electric potential and a nontrapping obstacle, the proof of \eqref{eq:4.2} relies on a local Carleman estimate from \cite{kn:V4} that imposes Dirichlet boundary condition. Extending this approach to Neumann boundary condition, or to include a magnetic potential, would require establishing a similar local Carleman estimate in more generality.

It would also be interesting to study decay rates for solutions to the wave equation with potentials that decay rapidly as in \eqref{eq:1.6}, but with respect to functions $\Theta$ that do not satisfy \eqref{eq:1.9} or \eqref{eq:1.10}. An example of such a function is $\Theta(r) = e^{-(\log(r + e))^\beta}$, $\beta > 1$.

The rest of the paper is organized as follows. In Section \ref{free resolv est section}, we establish the bound \eqref{eq:2.1} for derivatives of the free weighted resolvent. In Section \ref{perturbed resolvent section}, we derive Theorem \ref{1.1} using \eqref{eq:2.1}, the preliminary estimates proved in Section \ref{prelim estimates section}, along with Theorem \ref{4.1} (proved in \cite{kn:LSV}) and the limiting absorption principle. Section \ref{time decay section} recalls the strategy from \cite{kn:LSV} for converting resolvent estimates into time decay. The appendices collect well-known results on Gevrey cutoff functions, the limiting absorption principle, and the kernel of the free wave propagator, which are used earlier in the paper.

\section{Estimates for the free resolvent} \label{free resolv est section}

Denote by $P_0$ the self-adjoint realization of $-\Delta$
on $L^2(\mathbb{R}^d)$, $d\ge 2$. Let the function $\mu$ and the sequence $m=\{m_k\}_{k=0}^\infty$ be as in the Introduction. 
In this section we will prove the following 

\begin{prop} \label{2.1}
Let $\alpha$ and $\beta$ be multi-indices such that $|\alpha|+|\beta|\le 2$.
 Then, given any $\delta>0$ there is a constant $C=C_\delta>0$ so that the estimate 
\begin{equation}\label{eq:2.1}
\left\|\frac{d^k}{d\lambda^k}\mu\partial_x^\alpha(P_0-\lambda^2)^{-1}\partial_x^\beta\mu\right\|\le C^{k+1}m_k(|\lambda|+1)^{|\alpha|+|\beta|-1}
\end{equation}
holds for all integers $k\ge 0$ and all $\lambda\in\R$, $|\lambda|\ge\delta$. 
  If the dimension $d$ is odd, then the estimate (\ref{eq:2.1}) holds for all $\lambda\in\R$.
 \end{prop}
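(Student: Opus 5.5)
We work with the explicit outgoing kernel of the free resolvent,
\begin{equation*}
R_0(\lambda)(x,y)=\frac{i}{4}\left(\frac{\lambda}{2\pi|x-y|}\right)^{\frac{d-2}{2}}H^{(1)}_{\frac{d-2}{2}}(\lambda|x-y|),
\end{equation*}
and write it as $R_0(\lambda)(x,y)=e^{i\lambda|x-y|}F(\lambda,x,y)$, where $F$ has a controlled expansion. The key structural point is that each $\lambda$-derivative of $e^{i\lambda|x-y|}$ produces a factor $i|x-y|\le i(\langle x\rangle+\langle y\rangle)$. So $\frac{d^k}{d\lambda^k}R_0$ is, roughly, a sum over $\nu\le k$ of $\binom{k}{\nu}(i|x-y|)^\nu e^{i\lambda|x-y|}\partial_\lambda^{k-\nu}F$. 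The weight then absorbs the polynomial growth through
\begin{equation*}
\langle x\rangle^{\nu}\mu(x)\le c^{-\nu}\widetilde m_\nu\,\mu(x)^{1/2}\cdots,
\end{equation*}
or, more precisely, by splitting $\mu=\sqrt{\Theta}$ and using $(r+1)^k\sqrt{\Theta(r)}\le\widetilde m_k$. Since $\mu$ itself is a square root, we first replace $\mu$ by $\mu^{1/2}\mu^{1/2}$. The sup $\sup_r (r+1)^{k}\Theta(r)^{1/4}$ is then comparable to $\sqrt{\widetilde m_{2k}}$-type quantities, and this is exactly why (\ref{eq:1.9}) contains $\sqrt{m_{2k}}$. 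The other terms $k!$ and $m_{k+2}$ cover the derivatives falling on $F$ and the extra $\partial_x^\alpha,\partial_x^\beta$ derivatives.

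\textbf{High frequency, $|\lambda|\ge\delta$.} For $|\lambda|\ge\delta$ we proceed as follows.

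First, the case $k=0$ is the classical weighted estimate $\|\langle x\rangle^{-s}\partial^\alpha R_0\partial^\beta\langle x\rangle^{-s}\|\le C(|\lambda|+1)^{|\alpha|+|\beta|-1}$, which holds since $\mu$ decays faster than any power.

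Second, for $k\ge1$ we avoid naive kernel bounds, which are bad at high $\lambda$ for $|\alpha|+|\beta|=2$. Instead we use the commutator identity $\frac{d}{d\lambda}R_0=2\lambda R_0^2$, or equivalently the identity
\begin{equation*}
\partial_\lambda R_0(\lambda)=\frac{1}{2\lambda}\bigl[R_0,\,x\cdot\nabla\bigr]\text{-type}\quad\text{(dilation)},
\end{equation*}
or the Fourier representation
\begin{equation*}
R_0(\lambda)=\frac{i}{2\lambda}\int_0^\infty e^{i\lambda t}\,\frac{\sin(t\sqrt{P_0})}{\sqrt{P_0}}\cdots\,dt .
\end{equation*}
I would choose the wave-propagator route (Appendix kernel of the free wave propagator). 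Write $\mu R_0\mu=\frac{i}{2\lambda}\int_0^\infty e^{i\lambda t}\mu U(t)\mu\,dt$, with $U(t)=\sin(t\sqrt{P_0})/\sqrt{P_0}$. Differentiating $k$ times in $\lambda$ brings down $t^\nu$, plus derivatives of $1/\lambda$ that cost $k!\delta^{-k}$. Finite speed of propagation, together with the decay of the kernel of $U(t)$ away from the light cone (sharp Huygens in odd $d$; tails like $(t^2-|x-y|^2)^{-(d-1)/2}$ in even $d$), lets us trade $t^\nu$ for $(\langle x\rangle+\langle y\rangle)^\nu$ on the main part. On the remaining part we integrate by parts in $t$ using $e^{i\lambda t}=(i\lambda)^{-1}\partial_te^{i\lambda t}$, which is allowed since $|\lambda|\ge\delta$. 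Then (\ref{eq:1.8}) lets us combine $\Theta(c\langle x\rangle)^{1/2}$ and $\Theta(c\langle y\rangle)^{1/2}$ so as to control $(\langle x\rangle+\langle y\rangle)^\nu$ by $\widetilde m_\nu$. Finally, Leibniz and (\ref{eq:1.10}) resum the binomial sum into $C^{k+1}m_k$.

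Third, the spatial derivatives $\partial^\alpha,\partial^\beta$ are moved onto the weights or onto $U(t)$. One part is energy estimates, giving the factor $(|\lambda|+1)^{|\alpha|+|\beta|-1}$ via $P_0R_0=I+\lambda^2R_0$ when $|\alpha|+|\beta|=2$. The other part is derivatives of $\mu$, handled with (\ref{eq:1.7}).

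\textbf{Odd $d$, all $\lambda$.} In odd dimensions $R_0(\lambda)(x,y)=e^{i\lambda|x-y|}|x-y|^{2-d}p(\lambda|x-y|)$ with $p$ a polynomial of degree $(d-3)/2$, so the kernel is entire in $\lambda$. Derivatives near $\lambda=0$ are then explicit: $\partial_\lambda^k$ gives at most $|x-y|^{k+(d-3)/2}$ times $|x-y|^{2-d}$, and each such factor is absorbed by the weights as above. The resulting locally integrable singularity $|x-y|^{2-d}$, or $|x-y|^{-d}$ after two derivatives treated as a Calderón–Zygmund kernel, is bounded by Schur's test and standard theory. This part is routine.

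\textbf{Main obstacle.} The hard step is getting the precise combinatorics uniform in $k$, with the geometric constant $C^{k+1}$ and no loss beyond $m_k$. This applies in particular to the even-dimensional tail of the wave kernel and to the top-order case $|\alpha|+|\beta|=2$. There I would first try splitting $\mu=\mu^{1/2}\mu^{1/2}$, applying (\ref{eq:1.9}) to each half, and invoking (\ref{eq:1.10}) at every Leibniz step.
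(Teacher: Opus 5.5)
\textbf{Gap: the even-dimensional tail.} The step that carries the whole proposition is left open. In even dimensions, on the region $t\gg|x|+|y|$ the weights give no decay in $t$. There the paper writes the cosine kernel as $t^{-d}W(|x-y|/t)$ with $W$ analytic in $|z|<1$, and your sine kernel behaves similarly like $t^{1-d}$. So the kernel decays only like a fixed power of $t$, while $\partial_\lambda^k$ brings down $t^k$. Consequently $\int_{t_0}^\infty t^{k-d}e^{-it\lambda}W(|x-y|/t)\,dt$ is not absolutely convergent once $k\ge d-1$. It must be estimated through the oscillation of $e^{-it\lambda}$, uniformly in $k$, with a bound of the form $C^k(\langle x-y\rangle^k+k!)$.

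``Integrate by parts in $t$'' does not achieve this as stated. Each integration gains a single power of $t$, so you need about $k$ of them. To keep the constants geometric you must bound $\partial_t^j[W(|x-y|/t)]$ by $C^j j!\,t^{-j}$ uniformly in $j$ and control the boundary terms at $t_0$. That requires the analyticity of $W$, which your proposal never uses, and you yourself list this step as the unresolved ``main obstacle''.

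The paper's device is to use that analyticity to rotate the contour, $t=t_0(1+\sigma e^{-i\theta})$ with $t_0=2\max\{1,|x-y|\}$ and $0<\theta\ll 1$. For $\lambda\ge\delta$ the exponential is then damped by $e^{-\sigma t_0\delta\sin\theta}$, which yields the kernel bound $C^k(\langle x-y\rangle^k+k!)\mu(x)\mu(y)$. Half of the weights then absorbs $\langle x-y\rangle^k$ at cost $\sqrt{m_{2k}}\le C^{k+1}m_k$ (via \eqref{eq:1.8}, \eqref{eq:1.9}), and the other half, $\mu(x)^{1/2}\mu(y)^{1/2}$, makes Schur's test work. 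The cutoff errors are handled the same way using $\mu(x)^{1/2}(1-\phi)(x/t)\lesssim\Theta(c_1t)^{1/4}$. A $k$-fold integration by parts with Cauchy estimates could reproduce this bound, but that argument is exactly what is missing.

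\textbf{Wrong representation formula.} For $\Im\lambda>0$ one has exactly $R_0(\lambda)=\int_0^\infty e^{i\lambda t}\sin(t\sqrt{P_0})P_0^{-1/2}\,dt$, without the factor $\frac{i}{2\lambda}$, and $\|\sin(t\sqrt{P_0})P_0^{-1/2}\|=t$. So the factor $(|\lambda|+1)^{-1}$ is not free. In the paper it arises as follows:
\begin{enumerate}
\item Represent $\lambda(P_0-\lambda^2)^{-1}$ through the uniformly bounded $\cos(t\sqrt{P_0})$, see \eqref{eq:2.2}.
\item Prove $\|\partial_\lambda^k(\lambda\mu R_0\mu)\|\le C^{k+1}m_k$ uniformly for $|\lambda|\ge\delta$. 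Where $|x|$ or $|y|\gtrsim t$, the operator norm is $\lesssim\sqrt{\Theta(c_0t)}$, which gives $m_{k+2}\le C^{k+1}m_k$.
\item Divide by $\lambda$. This division is where your Leibniz argument with \eqref{eq:1.9}, \eqref{eq:1.10} belongs.
\end{enumerate}

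\textbf{What works.} Moving $\partial^\alpha$ onto the propagator fails for $|\alpha|=2$, since it is unbounded on $L^2$ for each $t$. However, your weighted elliptic-regularity route is a valid substitute for the paper's resolvent identity with $(P_0-i\theta^2)^{-1}$ and Lemma \ref{2.2}. It uses $-\Delta\,\partial_\lambda^k R_0=\partial_\lambda^k(\lambda^2R_0)$ for $k\ge1$, the bound $|\nabla\mu|+|\nabla^2\mu|\lesssim\mu$ from \eqref{eq:1.7}, and $k\,m_{k-1}+k^2m_{k-2}\lesssim m_k$ from \eqref{eq:1.10}. Your odd-dimensional low-frequency argument is the same as the paper's.
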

 
 \begin{proof}
 Since the operator 
$\partial_x^\beta$ commutes with the free resolvent, it suffices to prove the proposition with $\beta=0$ and $|\alpha|\le 2$.
 Denote $\mathbb{C}^- \defeq \{\lambda\in\mathbb{C}:{\rm Im}\,\lambda<0\}$. 
  We will make use of the formula
\begin{equation}\label{eq:2.2}
\lambda(P_0-\lambda^2)^{-1}=i\int_0^\infty e^{-it\lambda}\cos\left(t\sqrt{P_0}\right)dt,\quad \lambda\in\mathbb{C}^-.
\end{equation}
 Differentiating (\ref{eq:2.2}) $k$ times we get
\begin{equation}\label{eq:2.3}
\frac{d^k}{d\lambda^k}\left(\lambda(P_0-\lambda^2)^{-1}\right)=i\int_0^\infty (-it)^ke^{-it\lambda}\cos\left(t\sqrt{P_0}\right)dt
,\quad \lambda\in\mathbb{C}^-.
\end{equation}
Let $\phi\in C_0^\infty(\mathbb{R}^d ; [0,1])$ be such that $\phi(x)=1$ for $|x|\le 1/8$,
$\phi(x)=0$ for $|x|\ge 1/4$. Write
\begin{equation*}
\mu\cos\left(t\sqrt{P_0}\right)\mu=\mathcal{A}_1(t)+\mathcal{A}_2(t),
\end{equation*}
where 
\begin{align*}
& \mathcal{A}_1(t)=\phi(x/t)\mu(x)\cos\left(t\sqrt{P_0}\right)\mu(x)\phi(x/t),\\
& \mathcal{A}_2(t)=(1-\phi(x/t))\mu(x)\cos\left(t\sqrt{P_0}\right)\mu(x)\phi(x/t)
+\mu(x)\cos\left(t\sqrt{P_0}\right)\mu(x)(1-\phi(x/t).
\end{align*}
It is easy to see that for some $c_0 > 0$,
\begin{equation*}
\|\mathcal{A}_2(t)\|\lesssim\sqrt{\Theta(c_0t)}
\end{equation*}
Thus the operator-valued function
\begin{equation*}
\int_0^\infty t^ke^{-it\lambda}\mathcal{A}_2(t)dt
\end{equation*}
extends to $\{{\rm Im}\,\lambda\le 0\}$ and satisfies in this region the bound
\begin{equation}\label{eq:2.4}
\left\|\int_0^\infty t^ke^{-it\lambda}\mathcal{A}_2(t)dt\right\|\lesssim \sup_{t\ge 0}(t+1)^{k+2}\sqrt{\Theta(c_0t)}\le
\widetilde C^{k+1}m_{k+2}\le C^{k+1}m_{k}
\end{equation}
with some constant $C>0$ independent of $k$, where we have used the condition (\ref{eq:1.9}).
On the other hand, if $d$ is odd, 
from the Huygens principle we have that the kernel of the operator
$\cos\left(t\sqrt{P_0}\right)$ vanishes in $|x-y|<t$. Observe now that $x\in {\rm supp}\,\phi(x/t)$ and
$y\in {\rm supp}\,\phi(y/t)$ imply $|x-y|\le t/2$. 
Hence $\mathcal{A}_1(t)\equiv 0$ in this case. Therefore, for odd dimensions it follows from
(\ref{eq:2.3}) and (\ref{eq:2.4}) that (\ref{eq:2.1}) holds with $\alpha=\beta=0$.

Let us see that in even dimensions (\ref{eq:2.1}) with $\alpha=\beta=0$ holds, too. To this end we will use the fact, demonstrated in Appendix \ref{cosine kernel appendix}, that 
when $d$ is even,
the kernel of the operator
$\phi(x/t) \mu(x)\cos\left(t\sqrt{P_0}\right) \mu(x) \phi(x/t)$ is of the form $\mu(x) \mu(y)\phi(x/t)\phi(y/t)t^{-d}W(|x-y|/t)$, where the function $W(z)$ is analytic in $|z|<1$.
Therefore, if $\lambda\in\mathbb{C}^-$, the kernel $\widetilde K$ of the operator
\begin{equation*}
\int_2^\infty t^ke^{-it\lambda}\mathcal{A}_1(t)dt
\end{equation*}
can be written in the form 
\begin{equation*}
\begin{split}
\widetilde K(x,y;\lambda)&=\mu(x)\mu(y)\int_{t_0}^\infty \phi(x/t)\phi(y/t)e^{-it\lambda}t^{k-d}W(|x-y|t^{-1})dt\\
&=\mu(x)\mu(y)\int_{t_0}^\infty e^{-it\lambda}t^{k-d}W(|x-y|t^{-1})dt\\
&-\mu(x)\mu(y)\int_{t_0}^\infty ((1-\phi)(x/t)\phi(y/t)
+(1-\phi)(y/t))e^{-it\lambda}t^{k-d}W(|x-y|t^{-1})dt\\
&=: K_1(\lambda)+K_2(\lambda),
\end{split}
\end{equation*}
where $t_0 = 2\max\{1,|x-y|\}$. Note that the function $W(|x-y|t^{-1})$ extends to complex $t$ and 
\begin{equation*}
|W(|x-y|t^{-1})|\lesssim 1\quad\mbox{for}\quad|t|\ge t_0.
\end{equation*}
Observe that
\begin{equation*}
\mu(x)^{1/2}(1-\phi)(x/t)\lesssim \Theta(c_1t)^{1/4},\,c_1>0,
\end{equation*}
 and
\begin{equation*}
\sup_{t\ge 2}t^k\Theta(c_1t)^{1/4}=\left(\sup_{t\ge 2}t^{2k}\sqrt{\Theta(c_1t)}\right)^{1/2}
\le \widetilde C^{k+1}m_{2k}^{1/2}\le C^{k+1}m_k,
\end{equation*}
where we have used the condition (\ref{eq:1.9}).
 Hence 
the function $K_2(\lambda)$ can be extended to $\{{\rm Im}\,\lambda\le 0\}$ and satisfies in this region the bound
\begin{equation}\label{eq:2.5}
|K_2(\lambda)|\lesssim C^{k+1}m_k\mu(x)^{1/2}\mu(y)^{1/2}\int_2^\infty t^{-d}dt\lesssim 
C^{k}m_k\mu(x)^{1/2}\mu(y)^{1/2}.
\end{equation}
 We will now bound $|K_1|$ for $\lambda\in \R$, $\lambda\ge\delta$, $\delta>0$ being arbitrary. To this end, 
 observe that we can change the contour of integration in the integral, which
 we denote by $I(\lambda)$, and write it in the form
 \begin{equation*}
 I(\lambda)=t_0^{k-d+1} e^{-i \theta}\int_{0}^\infty e^{-it_0(1+\sigma e^{-i\theta})\lambda}
 (1+\sigma e^{-i\theta})^{k-d}W(|x-y|(t_0(1+\sigma e^{-i\theta}))^{-1})d\sigma
 \end{equation*}
 where $0<\theta\ll 1$ is a constant. Then 
 $I(\lambda)$ satisfies the bounds
 \begin{equation*}
 \begin{split}
 |I(\lambda)|&\lesssim t_0^{k}\int_{0}^\infty e^{-\sigma t_0\delta\sin\theta}
 |1+\sigma e^{-i\theta}|^{k-d}d\sigma\\
 &\lesssim t_0^{k}\sup_{\sigma>0}(\sigma+1)^ke^{-\sigma t_0\delta\sin\theta}\int_0^\infty(\sigma+1)^{-d}d\sigma\\
 &\lesssim (2t_0)^{k}+\sup_{\sigma\ge 1}(2t_0\sigma)^ke^{-\sigma t_0\delta\sin\theta}\\
  &\lesssim (2t_0)^{k}+(2/\delta\sin\theta)^kk!.
 \end{split}
 \end{equation*}
 Since $t_0\sim \langle x-y\rangle$, this implies 
 \begin{equation}\label{eq:2.6}
 |K_1(\lambda)|\lesssim C^k\left(\langle x-y\rangle^{k}+k!\right)\mu(x)\mu(y)
 \end{equation}
 for $\lambda\ge\delta$, with some constant $C>0$ independent of $k$ but depending on $\delta$. 
 On the other hand, in view of the condition (\ref{eq:1.8}), we have 
  \begin{equation}\label{eq:2.7}
  \mu(x)^{1/2}\mu(y)^{1/2}\lesssim \Theta(c_2\langle x-y\rangle)^{1/4},\quad c_2>0.
   \end{equation}
   By (\ref{eq:2.7}),
   \begin{equation}\label{eq:2.8}
   \langle x-y\rangle^{k}\mu(x)^{1/2}\mu(y)^{1/2}\lesssim \left(\sup_{r\ge 1}r^{2k}\sqrt{\Theta(c_2r)}\right)^{1/2}\le \widetilde C^{k+1}m_{2k}^{1/2}\le C^{k+1}m_k,
    \end{equation}
    where we have used the condition (\ref{eq:1.9}). 
     By (\ref{eq:2.6}) and (\ref{eq:2.8}),
     \begin{equation}\label{eq:2.9}
 |K_1(\lambda)|\lesssim C^km_k\mu(x)^{1/2}\mu(y)^{1/2}.
 \end{equation}
 By (\ref{eq:2.5}) and (\ref{eq:2.9}), we obtain
 \begin{equation}\label{eq:2.10}
 |\widetilde K(x,y;\lambda)|\lesssim C^km_k\mu(x)^{1/2}\mu(y)^{1/2}.
 \end{equation}
 It follows from (\ref{eq:2.10}) and Schur's lemma that the operator with kernel $\widetilde K$
is bounded on $L^2$ uniformly in $\lambda\ge\delta$ with a bound $O\left(C^km_k\right)$.
It is easy to see that the same conclusion holds for $\lambda\le -\delta$.
Furthermore, it is clear that the operator-valued function
\begin{equation*}
\int_0^2 t^ke^{-it\lambda}\mathcal{A}_1(t)dt
\end{equation*}
extends to $\{{\rm Im}\,\lambda\le 0\}$ and its norm is $O\left(2^k\right)$.  Thus we conclude that an analog of the estimate 
 (\ref{eq:2.4}) with the operator $\mathcal{A}_2$ replaced by $\mathcal{A}_1$ holds for $\lambda\in\R$, $|\lambda|\ge\delta$, 
 with a new constant $C>0$ depending on $\delta$. 
 This implies (\ref{eq:2.1}) with $\alpha=\beta=0$ in this case.
 
 We will now show that (\ref{eq:2.1}) with $\alpha=\beta=0$ implies (\ref{eq:2.1}) with $1\le |\alpha|\le 2$ and $\beta=0$. 
 It suffices to do so for $k\ge 1$ since for $k=0$ the estimate is well-known. 
 To this end we need the following
 
 \begin{lemma} \label{2.2} 
Let $\alpha$ be a multi-index such that $|\alpha|\le 2$. 
 Then, there exist constants $C > 0$ and $\theta_0 > 0$ such that for all $\theta \ge \theta_0$ we have the estimate
\begin{equation}\label{eq:2.11}
\left\|\mu\partial_x^\alpha\left(P_0-i\theta^2\right)^{-1}\mu^{-1}
\right\|_{L^2\to L^2}\le C \theta^{|\alpha|-2}.
\end{equation}
\end{lemma}

Differentiating the resolvent identity
 \begin{equation*}
 (P_0-\lambda^2)^{-1}=(P_0-i\theta^2)^{-1}+(\lambda^2-i\theta^2)(P_0-i\theta^2)^{-1}(P_0-\lambda^2)^{-1},\quad \lambda\in\mathbb{C}^-,
 \end{equation*}
 we obtain
 \begin{equation*}
 \begin{split}
 \frac{d^k}{d\lambda^k}(P_0-\lambda^2)^{-1}&=(\lambda^2-i\theta^2)(P_0-i\theta^2)^{-1}\frac{d^k}{d\lambda^k}(P_0-\lambda^2)^{-1}\\
 &+2k\lambda(P_0-i\theta^2)^{-1}\frac{d^{k-1}}{d\lambda^{k-1}}(P_0-\lambda^2)^{-1}\\
 &+k(k-1)(P_0-i\theta^2)^{-1}\frac{d^{k-2}}{d\lambda^{k-2}}(P_0-\lambda^2)^{-1}
  \end{split}
 \end{equation*}
 for all integers $k\ge 1$. Thus we get the identity
 \begin{equation*}
 \begin{split}
 \frac{d^k}{d\lambda^k}\mu\partial_x^\alpha(P_0-\lambda^2)^{-1}\mu&=(\lambda^2-i\theta^2)\mu\partial_x^\alpha(P_0-i\theta^2)^{-1}\mu^{-1}
 \frac{d^k}{d\lambda^k}\mu(P_0-\lambda^2)^{-1}\mu\\
 &+2k\lambda\mu\partial_x^\alpha(P_0-i\theta^2)^{-1}\mu^{-1}\frac{d^{k-1}}{d\lambda^{k-1}}\mu(P_0-\lambda^2)^{-1}\mu\\
 &+k(k-1)\mu\partial_x^\alpha(P_0-i\theta^2)^{-1}\mu^{-1}\frac{d^{k-2}}{d\lambda^{k-2}}\mu(P_0-\lambda^2)^{-1}\mu
  \end{split}
 \end{equation*}
 for all integers $k\ge 1$, which clearly extends to $\{{\rm Im}\,\lambda\le 0\}$. By (\ref{eq:2.1}) with $\alpha=\beta=0$
 and (\ref{eq:2.11}),
 \begin{equation}\label{eq:2.12}
 \begin{split}
 \left\|\frac{d^k}{d\lambda^k}\mu\partial_x^\alpha
 (P_0-\lambda^2)^{-1}\mu\right\|&\lesssim\theta^{|\alpha|-2}(|\lambda|^2+\theta^2)(|\lambda|+1)^{-1}C^{k+1}m_k\\
 &+\theta^{|\alpha|-2}C^{k}km_{k-1}+\theta^{|\alpha|-2}C^{k-1}k(k-1)m_{k-2}
  \end{split}
 \end{equation}
 for all $\lambda\in\R$, $|\lambda|\ge\delta$, and all $\theta\ge\theta_0$. We now take $\theta=\theta_0+|\lambda|$.
 It easy to see that (\ref{eq:2.12}) implies (\ref{eq:2.1}) with 
 $1\le |\alpha|\le 2$ and $\beta=0$. 
 In the same way, in odd dimensions we get that (\ref{eq:2.1}) with $\alpha=\beta=0$ and all $\lambda\in\R$ implies (\ref{eq:2.1}) with 
 $1\le |\alpha|\le 2$, $\beta=0$ 
 and all $\lambda\in\R$. 
 
 It remains to prove the estimate (\ref{eq:2.1}) with $\alpha=\beta=0$ for $|\lambda|\le 1$ in odd dimensions $d\ge 3$. To this end, we will use
 that the kernel $K_0(x,y;\lambda)$ of the resolvent $(P_0-\lambda^2)^{-1}$, $\lambda\in\mathbb{C}^-$,
can be expressed in terms of the Henkel functions by the formula
\begin{equation*}
K_0(x,y;\lambda)=i2^{-2}(2\pi)^{-\frac{d-2}{2}}\lambda^{\frac{d-2}{2}}|x-y|^{-\frac{d-2}{2}}H^-_{\frac{d-2}{2}}(\lambda|x-y|).
\end{equation*}
It is well-known that $z^{\frac{d-2}{2}}H^-_{\frac{d-2}{2}}(z)=p_d(z)e^{-iz}$, where $p_d(z)$ is a polynomial of order
 $\frac{d-3}{2}$. We have the formula
  \begin{equation*}
  \partial_z^k(p_d(z)e^{iz})=\sum_{\nu=0}^{\frac{d-3}{2}}\frac{k!}{\nu!(k-\nu)!}\frac{d^\nu p_d(z)}{dz^\nu}(-i)^{k-\nu}e^{-iz}
 \end{equation*}
 which imlpies the bounds 
\begin{equation}\label{eq:2.13}
\left|\partial_z^k\left(z^{\frac{d-2}{2}}H^-_{\frac{d-2}{2}}(z)\right)\right|\lesssim 
\sum_{\nu=0}^{\frac{d-3}{2}}k^\nu(|z|+1)^{\frac{d-3}{2}-\nu}\lesssim |z|^{\frac{d-3}{2}}+(k+1)^{\frac{d-3}{2}}
\end{equation}
 for ${\rm Im}\,z\le 0$ and all integers $k\ge 0$. By (\ref{eq:2.13}),
 \begin{equation}\label{eq:2.14}
\left|\partial_\lambda^kK_0(x,y;\lambda)\right|\lesssim|x-y|^{k-\frac{d-1}{2}}+(k+1)^{\frac{d-3}{2}}|x-y|^{k-d+2}
\end{equation}
 for ${\rm Im}\,\lambda\le 0$, $|\lambda|\le 1$, and all integers $k\ge 0$. By (\ref{eq:2.14}) and (\ref{eq:2.8}),
 \begin{equation}\label{eq:2.15}
\left|\mu(x)\mu(y)\partial_\lambda^kK_0(x,y;\lambda)\right|\le C^{k+1}m_k\left(|x-y|^{-\frac{d-1}{2}}+|x-y|^{-d+2}\right)
\mu(x)^{1/2}\mu(y)^{1/2}.
\end{equation}
 Clearly, (\ref{eq:2.1}) with $|\alpha| = |\beta| = 0$ and $|\lambda|\le 1$ 
 follows from (\ref{eq:2.15}) and Schur's lemma. \\
 \end{proof}
 
 \begin{proof}[Proof of Lemma \ref{2.2}] 
We utilize the standard resolvent estimate: there exists $C > 0$ so that for any multi-indices $\alpha, \beta$ with $|\alpha| + |\beta| \le 2$, and for any $\theta \ge 1$, 
\begin{equation} \label{free resolv bd}
\| \partial_x^\alpha (P_0 - i\theta^2)^{-1} \partial_x^\beta \|_{L^2 \to L^2} \le C \theta^{|\alpha| + |\beta| - 2}.
\end{equation}
 We compute the commutator
\begin{equation*}
\begin{split}
\mu(P_0 - i \theta^2) - (P_0 - i\theta^2) \mu &= - \Delta \mu -2 (\nabla \mu) \cdot \nabla  \\
&=- \sum_{j =1} \partial_{x_j} (\partial_{x_j} \mu) - \Delta \mu \qefed [ \mu, P_0],
\end{split}
\end{equation*}
which implies the resolvent identity
\begin{equation*}
\mu (P_0 - i\theta^2)^{-1} = (P_0 - i \theta^2)^{-1}  \mu -  (P_0 - i \theta^2)^{-1}  [ \mu, P_0]  (P_0 - i \theta^2)^{-1} .
\end{equation*}
Therefore, for $f \in C_0^\infty(\R^d)$, 
\begin{equation} \label{resolv id with f}
\mu(P_0 - i \theta^2)^{-1} \mu^{-1} f = (P_0 - \theta^2)^{-1} f -  (P_0 - \theta^2)^{-1} [\mu, P_0] (P_0 - i\theta^2)^{-1} \mu^{-1} f.
\end{equation}
Continuing, for any $u \in C_0^\infty(\R^d)$, 
\begin{equation} \label{commutator insert weights}
[\mu, P_0] u = \left[ -(\Delta \mu) \mu^{-1} - \sum_{j=1}^d \partial_{x_j} \big((\partial_{x_j} \mu) \mu^{-1} \big)  \right] (\mu u)
\end{equation}
By \eqref{eq:1.7}, $(\partial_{x_j} \mu)\mu^{-1}$ and $(\Delta \mu) \mu^{-1}$ are bounded multiplication operators on $L^2$. So \eqref{commutator insert weights} continues to hold for any $u \in L^2(\R^d)$, with both sides being interpreted as members of the negative index Sobolev space $H^{-1}(\R^d)$. Therefore, by \eqref{free resolv bd}, \eqref{resolv id with f}, and \eqref{commutator insert weights}, 

\begin{equation*}
\begin{split}
\| \mu (P_0 - i\theta^2)^{-1} \mu^{-1} f \|_{L^2} &\le \| (P_0 - i\theta^2)^{-1} f \|_{L^2} + \| (P_0 - i\theta^2)^{-1} \|_{H^{-1} \to L^2} \| \mu (P_0 - i\theta^2)^{-1} \mu^{-1} f \|_{L^2} \\
& \le C(\theta^{-2} \| f\|_{L^2} + \theta^{-1} \| \mu (P_0 - i\theta^2)^{-1} \mu^{-1} f \|_{L^2}),
\end{split} 
\end{equation*}
for some $C > 0$ independent of $\theta$ and $f$. Thus taking $\theta$ large enough we can absorb the second term on the right side into the left hand side, completing the proof of \eqref{eq:2.11} when $|\alpha| = 0$. Standard arguments using elliptic regularity then imply the general case  $|\alpha | \le 2$.\\
 \end{proof}
 
 \section{Preliminary estimates}  \label{prelim estimates section}
 
Let $\mathcal{L}(L^2)$ denote the set of the bounded operators on $L^2(\Omega)$. Let $\Lambda\subset\R$ be a closed interval.
Let also $\{m_k\}_{k= 0}^\infty$ be the sequence introduced in Section 1. 
Our goal in this section is to prove the following two lemmas.

\begin{lemma}  \label{3.1} 
Let $\mathcal{M}_j(\lambda)\in C^\infty(\Lambda;\mathcal{L}(L^2))$, $j=1, \dots ,J$, be operator-valued functions 
satisfying the bounds
\begin{equation}\label{eq:3.1}
\left\|\frac{d^k\mathcal{M}_j(\lambda)}{d\lambda^k}\right\|\le M_j(\lambda)C_j^km_k,\quad\lambda\in\Lambda,
\end{equation}
for all integers $k\ge 0$, with constants $C_j>0$ independent of $k$ and $\lambda$, and functions $M_j>0$ independent of $k$.
Then the operator
\begin{equation*}
\mathcal{M}(\lambda)= \prod_{j= 1}^J\mathcal{M}_j(\lambda) \in C^\infty(\Lambda;\mathcal{L}(L^2))
\end{equation*}
satisfies the bound
\begin{equation}\label{eq:3.2}
\left\|\frac{d^k\mathcal{M}(\lambda)}{d\lambda^k}\right\|\le M(\lambda)C^km_k,\quad\lambda\in\Lambda,
\end{equation}
for all integers $k\ge 0$, with a constant $C>0$ independent of $k$ and $\lambda$, and where
\begin{equation*}
M(\lambda)=\prod_{j=1}^JM_j(\lambda).
\end{equation*}
\end{lemma}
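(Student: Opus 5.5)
We argue by induction on the number $J$ of factors. The case $J=1$ is the hypothesis (\ref{eq:3.1}) itself, with $C=C_1$. For the induction step it suffices to treat a product of two factors $\mathcal{A}(\lambda)\mathcal{B}(\lambda)$ satisfying
\begin{equation*}
\left\|\frac{d^k\mathcal{A}}{d\lambda^k}\right\|\le A(\lambda)a^km_k,\qquad \left\|\frac{d^k\mathcal{B}}{d\lambda^k}\right\|\le B(\lambda)b^km_k .
\end{equation*}
We then apply the two-factor result with $\mathcal{A}=\prod_{j=1}^{J-1}\mathcal{M}_j$, for which the bound holds by the inductive hypothesis, and $\mathcal{B}=\mathcal{M}_J$.

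For two factors we use the Leibniz rule for operator-valued $C^\infty$ functions. It is valid in the norm topology of $\mathcal{L}(L^2)$, because the product is continuous and bilinear. It gives
\begin{equation*}
\frac{d^k}{d\lambda^k}(\mathcal{A}\mathcal{B})=\sum_{\nu=0}^k\frac{k!}{\nu!(k-\nu)!}\,\frac{d^\nu\mathcal{A}}{d\lambda^\nu}\,\frac{d^{k-\nu}\mathcal{B}}{d\lambda^{k-\nu}} .
\end{equation*}
Taking norms and using submultiplicativity, we get
\begin{equation*}
\left\|\frac{d^k}{d\lambda^k}(\mathcal{A}\mathcal{B})\right\|\le A(\lambda)B(\lambda)\,k!\sum_{\nu=0}^k\frac{m_\nu m_{k-\nu}}{\nu!(k-\nu)!}\,a^\nu b^{k-\nu}.
\end{equation*}
Now we invoke condition (\ref{eq:1.10}), namely $\frac{m_\nu m_{k-\nu}}{\nu!(k-\nu)!}\le C_4\frac{m_k}{k!}$. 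This bounds the right-hand side by
\begin{equation*}
C_4A(\lambda)B(\lambda)m_k\sum_{\nu=0}^k a^\nu b^{k-\nu}\le C_4(k+1)\max(a,b)^k A(\lambda)B(\lambda)m_k .
\end{equation*}

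The one point that needs care is the constant $C_4$ and the factor $k+1$, since the statement (\ref{eq:3.2}) has no multiplicative constant in front of $C^k$. For $k\ge 1$ we have $C_4(k+1)\le (2C_4+2)^k$, so the bound becomes $(C'\max(a,b))^k A B m_k$ with $C'=2C_4+2$. For $k=0$ the bound $\|\mathcal{A}\mathcal{B}\|\le AB$ holds directly, using $m_0=1$. Hence the two-factor product satisfies the required estimate with $C=C'\max(a,b)$.

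Iterating $J-1$ times gives (\ref{eq:3.2}) with $C=(C')^{J-1}\max_jC_j$, which is independent of $k$ and $\lambda$ since $J$ is fixed. The functions $M_j$ only multiply through, giving $M=\prod_j M_j$. Smoothness of the product in $C^\infty(\Lambda;\mathcal{L}(L^2))$ is immediate from the same Leibniz rule.
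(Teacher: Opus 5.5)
Your proof is correct and follows the paper's argument: reduce to two factors, apply the Leibniz formula, and use condition \eqref{eq:1.10} to collapse the binomial sum. You absorb $C_4(k+1)$ into $(2C_4+2)^k$ for $k\ge 1$ and check $k=0$ directly using $m_0=1$; this is slightly more careful than the paper, whose computation only yields $M(\lambda)C^{k+1}m_k$ rather than the stated $M(\lambda)C^km_k$.
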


\begin{proof} 
Clearly, it suffices to prove the lemma with $J=2$. 
By the Leibnitz formula we have
\begin{equation}\label{eq:3.3}
\frac{d^k}{d\lambda^k}(\mathcal{M}_1(\lambda)\mathcal{M}_2(\lambda))=
\sum_{\nu=0}^k\frac{k!}{\nu!(k-\nu)!}\frac{d^\nu \mathcal{M}_1(\lambda)}{d\lambda^\nu}
\frac{d^{k-\nu}\mathcal{M}_2(\lambda)}{d\lambda^{k-\nu}}.
\end{equation}
By \eqref{eq:3.1} and \eqref{eq:3.3} together with the condition \eqref{eq:1.10} we get
\begin{equation}\label{eq:3.4}
\begin{split}
\left\|\frac{d^k}{d\lambda^k}(\mathcal{M}_1(\lambda)\mathcal{M}_2(\lambda))\right\|
&\le M_1(\lambda)M_2(\lambda)\sum_{\nu=0}^k\frac{k!}{\nu!(k-\nu)!}C_1^{\nu+1}m_\nu C_2^{k-\nu+1}m_{k-\nu}\\
&\le M_1(\lambda)M_2(\lambda)C^{k+1}m_{k}
\end{split}
\end{equation}
for every integer $k \ge 0$ with some constant $C>0$ independent of $k$ and $\lambda$, which confirms \eqref{eq:3.2} in this case. \\
\end{proof}

\begin{lemma}  \label{3.2} 
Let $\mathcal{M}(\lambda)\in C^\infty(\Lambda;\mathcal{L}(L^2))$ be an operator-valued function 
satisfying the bounds
\begin{equation}\label{eq:3.5}
\left\|\frac{d^k\mathcal{M}(\lambda)}{d\lambda^k}\right\|\le C^{k+1}m_k,\quad\lambda\in\Lambda,
\end{equation}
for all integers $k\ge 0$ with a constant $C>0$ independent of $k$ and $\lambda$. Suppose also that $\mathcal{M}(\lambda)$ is invertible 
with an inverse satisfying the bound
\begin{equation}\label{eq:3.6}
\left\|\mathcal{M}^{-1}(\lambda)\right\|\le \widetilde C,\quad\lambda\in\Lambda,
\end{equation}
 with a constant $\widetilde C>0$ independent of $\lambda$. Set $B = \max(2C, 2C \tilde C C_4)$, where $C_4$ is as in \eqref{eq:1.10}. Then the operator $\mathcal{M}^{-1}(\lambda) \in C^\infty(\Lambda;\mathcal{L}(L^2))$
satisfies the bound
\begin{equation}\label{eq:3.7}
\left\|\frac{d^k\mathcal{M}^{-1}(\lambda)}{d\lambda^k}\right\|\le B^{k+1}m_k,\quad\lambda\in\Lambda,
\end{equation}
for all integers $k\ge 0$ with a constant $B>0$ independent of $k$ and $\lambda$. 
\end{lemma}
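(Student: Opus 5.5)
We argue by induction on $k$, using the recursion obtained by differentiating the identity $\mathcal{M}(\lambda)\mathcal{M}^{-1}(\lambda)=I$. First, smoothness of $\mathcal{M}^{-1}$ on $\Lambda$ is standard. Inversion is analytic on the open set of invertible operators, and (\ref{eq:3.6}) gives a uniform Neumann-series neighbourhood: for $|\lambda'-\lambda|$ small, $\mathcal{M}(\lambda')=\mathcal{M}(\lambda)\bigl(I+\mathcal{M}^{-1}(\lambda)(\mathcal{M}(\lambda')-\mathcal{M}(\lambda))\bigr)$. Hence $\mathcal{M}^{-1}\in C^\infty(\Lambda;\mathcal{L}(L^2))$. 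Applying the Leibniz formula (\ref{eq:3.3}) to $\mathcal{M}\mathcal{M}^{-1}=I$ for $k\ge1$ and isolating the term $\nu=k$ gives
\begin{equation*}
\frac{d^k\mathcal{M}^{-1}}{d\lambda^k}=-\mathcal{M}^{-1}\sum_{\nu=0}^{k-1}\frac{k!}{\nu!(k-\nu)!}\,\frac{d^{k-\nu}\mathcal{M}}{d\lambda^{k-\nu}}\,\frac{d^{\nu}\mathcal{M}^{-1}}{d\lambda^{\nu}}.
\end{equation*}
This expresses the $k$-th derivative through derivatives of order $<k$, so induction applies.

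For the inductive step, assume (\ref{eq:3.7}) for all orders $\nu<k$. Insert (\ref{eq:3.5}), (\ref{eq:3.6}) and the inductive hypothesis, and apply (\ref{eq:1.10}) to the products $\frac{k!}{\nu!(k-\nu)!}m_{k-\nu}m_\nu\le C_4m_k$. Writing $j=k-\nu\in\{1,\dots,k\}$, this yields
\begin{equation*}
\Bigl\|\frac{d^k\mathcal{M}^{-1}}{d\lambda^k}\Bigr\|\le \widetilde C C_4\,m_k\sum_{j=1}^{k}C^{j+1}B^{k-j+1}
= \widetilde C C_4 C\,B^{k+1}m_k\sum_{j=1}^{k}\Bigl(\frac{C}{B}\Bigr)^{j}.
\end{equation*}
The two parts of the choice $B=\max(2C,2C\widetilde CC_4)$ enter here. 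The condition $B\ge 2C$ gives $(C/B)^j\le 2^{-(j-1)}(C/B)$. Consequently $\sum_{j\ge1}(C/B)^j\le 2C/B$, and the whole bound is at most $2\widetilde CC_4C^2B^{-1}\cdot B^{k+1}m_k$. The condition $B\ge 2C\widetilde CC_4$ then turns the prefactor into at most $C$, giving $\|\partial_\lambda^k\mathcal{M}^{-1}\|\le C\,B^{k+1}m_k$.

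The main obstacle is this constant bookkeeping: closing the induction with exactly $B^{k+1}m_k$, with no leftover factor $C$, and matching the base case $k=0$, where (\ref{eq:3.6}) only gives $\widetilde C$. My first attempt would run the induction in the sharper normalized form $\|\partial_\lambda^\nu\mathcal{M}^{-1}\|\le \widetilde C B^\nu m_\nu$. With this form the base case is exactly (\ref{eq:3.6}) since $m_0=1$. The same computation then produces the factor $\widetilde CC_4\cdot C\cdot\sum_j (C/B)^j\le \widetilde CC_4\cdot 2C^2/B\le C$ in front of $\widetilde CB^km_k$. I would then check whether the convention $C\le 1$ is implicit in (\ref{eq:3.5}); if not, $C$ must be replaced by $\max(C,1)$, which does not change the form of $B$. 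Finally, $\widetilde CB^k\le B^{k+1}$ whenever $\widetilde C\le B$, which yields (\ref{eq:3.7}). If this absorption fails, the fallback is to keep the extra factor of $C$ or $\widetilde C$ explicitly in the conclusion, since it is harmless for the later applications in Section~\ref{perturbed resolvent section}.
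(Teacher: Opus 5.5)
Your argument is the paper's argument: differentiate $\mathcal{M}\mathcal{M}^{-1}=I$, isolate the top-order derivative, bound it using \eqref{eq:3.5}, \eqref{eq:3.6} and \eqref{eq:1.10}, and close an induction with a geometric sum. The bookkeeping obstruction you found is real. It is a defect of the statement itself, and the paper's proof does not resolve it either.

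The paper's \eqref{eq:3.9} carries the factor $C^{k+1-\nu}$, whereas \eqref{eq:3.5} only gives $C^{k+2-\nu}$; this is exactly your stray factor $C$. The paper's base case is obtained ``provided $B\ge\widetilde C$'', which $B=\max(2C,2C\widetilde CC_4)$ does not ensure. Neither defect can be argued away:
\begin{itemize}
\item For $\mathcal{M}\equiv\epsilon I$ one may take $C=\epsilon$ and $\widetilde C=1/\epsilon$. Then $B=\max(2\epsilon,2C_4)<1/\epsilon=\|\mathcal{M}^{-1}\|$ once $\epsilon<1/(2C_4)$.
\item Take $\mathcal{M}(\lambda)=1+100\lambda$ on $|\lambda|\le 10^{-3}$, with the paper's $s=1$ sequence $m_k=k!$, for which $C_4=1$. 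One may take $C=10$ and $\widetilde C=10/9$, hence $B=200/9$. But $|\partial_\lambda^k\mathcal{M}^{-1}(0)|=100^k\,k!$ already exceeds $B^{k+1}k!$ at $k=3$.
\end{itemize}
So \eqref{eq:3.7} with the stated $B$ is false, and your plan cannot close as written.

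Your proposed repair goes the wrong way. Replacing $C$ by $\max(C,1)$ forces $C\ge 1$, so the leftover factor $C$ can no longer be absorbed, and $\widetilde C\le B$ is still not automatic. The fix is to enlarge $B$, not to normalize $C$.

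Your own intermediate bound already does the work. Once $B\ge 2C$ and \eqref{eq:3.7} holds in orders $<k$, you have $\|\partial_\lambda^k\mathcal{M}^{-1}\|\le 2\widetilde CC_4C^2B^km_k$. This is at most $B^{k+1}m_k$ as soon as $B\ge 2C^2\widetilde CC_4$. Hence with $B=\max(\widetilde C,2C,2C^2\widetilde CC_4)$, the base case is \eqref{eq:3.6} and the induction closes exactly.

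Equivalently, your normalized induction $\|\partial_\lambda^k\mathcal{M}^{-1}\|\le\widetilde C B_0^km_k$ closes with $B_0=\max(2C,2C^2\widetilde CC_4)$. You then conclude using $\widetilde C B_0^k\le\max(\widetilde C,B_0)^{k+1}$.

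The second example shows that a factor of order $C^2$ in $B$ is unavoidable. This corrected constant is harmless downstream, since Section~\ref{perturbed resolvent section} only uses that $B$ depends on $C$, $\widetilde C$, $C_4$ and not on $k$ or $\lambda$.
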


\begin{proof} 
Differentiating the identity
\begin{equation*}
\mathcal{M}(\lambda)\mathcal{M}^{-1}(\lambda)=I
 \end{equation*}
 we get
 \begin{equation}\label{eq:3.8}
 \mathcal{M}(\lambda)\frac{d^{k+1}\mathcal{M}^{-1}(\lambda)}{d\lambda^{k+1}}=-\sum_{\nu=0}^k\frac{(k+1)!}{\nu!(k+1-\nu)!}\frac{d^{k+1-\nu} \mathcal{M}(\lambda)}{d\lambda^{k+1-\nu}}
\frac{d^{\nu}\mathcal{M}^{-1}(\lambda)}{d\lambda^{\nu}}
 \end{equation}
 for any integer $k\ge 0$. By \eqref{eq:3.5}, \eqref{eq:3.6} and \eqref{eq:3.8},
 \begin{equation}\label{eq:3.9}
 \left\|\frac{d^{k+1}\mathcal{M}^{-1}(\lambda)}{d\lambda^{k+1}}\right\|\le \tilde C \sum_{\nu=0}^kC^{k+1-\nu}\frac{(k+1)! m_{k+1-\nu}}{\nu!(k+1-\nu)!}
\left\|\frac{d^{\nu}\mathcal{M}^{-1}(\lambda)}{d\lambda^{\nu}}\right\|.
 \end{equation}
 Now \eqref{eq:3.7} follows from \eqref{eq:3.9} by induction in $k$. Clearly, \eqref{eq:3.7} with $k=0$ follows from \eqref{eq:3.6},
 provided $B\ge\widetilde C$. 
 Furthermore, if we suppose that \eqref{eq:3.7} holds for all $\nu\le k$, by
 \eqref{eq:3.9} and \eqref{eq:1.10}, we obtain 
 \begin{equation}\label{eq:3.10}
 \begin{split}
 \left\|\frac{d^{k+1}\mathcal{M}^{-1}(\lambda)}{d\lambda^{k+1}}\right\|&\le \tilde C
 \sum_{\nu=0}^kC^{k+1-\nu}B^{\nu+1}\frac{(k+1)!m_\nu m_{k+1-\nu}}{\nu!(k+1-\nu)!}\\
 &\le  C\tilde C C_4 B^{k+1}m_{k+1} \sum_{\nu=0}^k \left( \frac{C}{B} \right)^{k-\nu}\\
  &\le 2 C\tilde C C_4 B^{k+1}m_{k+1} \le B^{k+2}m_{k+1}
 \end{split}
\end{equation}
Observe that we used $\sum_{\nu = 0}^{k} (C/B)^{k - \nu} \le 2$ (since $B\ge 2C$) and $B \ge 2C \tilde C C_4$. Thus, we conclude from 
\eqref{eq:3.10} that \eqref{eq:3.7} holds for $k+1$ as well, as desired. \\
 \end{proof}
 
\section{Estimates for the perturbed resolvent} \label{perturbed resolvent section}

Let $P$ be the self-adjoint operator from Section 1 and suppose that the electric potential $V\in L^\infty(\Omega,\mathbb{R})$
and the magnetic potential 
$b\in L^\infty(\Omega,\mathbb{R}^d)$ satisfy
\begin{equation}\label{eq:4.1}
|V(x)|+|b(x)|\le C\langle x\rangle^{-\rho}, \quad C>0,\,\rho>1.
\end{equation}
The following weighted resolvent bounds for $P$ are proved in \cite{kn:LSV}.

\begin{Theorem} \label{4.1}
Let $s > 1/2$. Assume the conditions \eqref{eq:1.4} and \eqref{eq:4.1} are fulfilled. Then, given any $\delta>0$, there is $C_\delta>0$ such that 
\begin{equation}\label{eq:4.2}
\left\|\langle x\rangle^{-s}\partial_x^\alpha(P-\lambda^2\pm i\varepsilon)^{-1}\partial_x^\beta\langle x\rangle^{-s}\right\|
\le C_\delta\lambda^{|\alpha|+|\beta|-1},\quad\lambda\ge\delta,\,0<\varepsilon<1,
\end{equation}
where $\alpha$ and $\beta$ are multi-indices such that $|\alpha|, \, |\beta| \le 1$.  
\end{Theorem}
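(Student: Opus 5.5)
The plan is to establish \eqref{eq:4.2} first for $\alpha=\beta=0$. The derivative bounds will then follow from an energy identity. We split the frequency range into a high-frequency regime $\lambda\ge\lambda_1\gg1$ and a medium-frequency regime $\delta\le\lambda\le\lambda_1$. Throughout, it is enough to take $s$ with $1/2<s<\min(1,\rho/2)$: larger $s$ give smaller weights, so the bound for such $s$ implies it for all $s>1/2$. The argument is written for the sign $-i\varepsilon$; the other sign follows by taking adjoints.

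\emph{Case b), high frequencies.} Here $b\equiv0$. First, upgrade the cutoff bound \eqref{eq:1.4} to the weighted bound $\|\langle x\rangle^{-s}\partial_x^\alpha(\widetilde P-\lambda^2\pm i\varepsilon)^{-1}\partial_x^\beta\langle x\rangle^{-s}\|\lesssim\lambda^{|\alpha|+|\beta|-1}$. This uses the standard gluing argument: write the resolvent as a combination of the free weighted resolvent on $\R^d$ (which satisfies the limiting absorption bound with weight $\langle x\rangle^{-s}$) and the cutoff resolvent near $\mathcal O$, glued by cutoffs $\chi$. Next, use
\begin{equation*}
(P-z)^{-1}=(\widetilde P-z)^{-1}\big(1+V(\widetilde P-z)^{-1}\big)^{-1}.
\end{equation*}
Since $2s<\rho$, the operator $\langle x\rangle^{s}V\langle x\rangle^{s}$ is bounded. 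Hence $\|\langle x\rangle^{s}V(\widetilde P-z)^{-1}\langle x\rangle^{-s}\|=O(\lambda^{-1})$, which is $\le 1/2$ for $\lambda\ge\lambda_1$. A Neumann series then gives the bound for $P$ with the same order $\lambda^{-1}$.

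\emph{Case a), high frequencies.} Here $b\not\equiv0$ and $\mathcal O=\emptyset$. The perturbation $2ib\cdot\nabla+\dots$ is of first order, so the Neumann series only yields an $O(1)$ error and cannot be closed. Instead I would prove the weighted estimate directly by a global Carleman / positive-commutator argument in the radial variable, in the spirit of Vodev's method for $L^\infty$ potentials. The key inequality is
\begin{equation*}
\lambda\|\langle x\rangle^{-s}u\|+\|\langle x\rangle^{-s}\nabla u\|\lesssim\|\langle x\rangle^{s}(P-\lambda^2\pm i\varepsilon)u\|.
\end{equation*}
To prove it, multiply by $\overline{\partial_r u}$ and by a radial weight $\varphi(r)$, and integrate by parts. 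The magnetic and electric terms are then controlled by $\|\langle x\rangle^{s}b\langle x\rangle^{s}\|_{L^\infty}$ and absorbed using $\lambda\gg1$. Because $b$ is only $L^\infty$, the commutator must be arranged so that no derivative ever falls on $b$. This is the step I expect to be the main obstacle.

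\emph{Medium frequencies.} On the compact interval $\delta\le\lambda\le\lambda_1$, I would use a Carleman estimate with a weight of size $e^{C\varphi}$ to obtain a uniform bound (with constant $e^{C(\delta,\lambda_1)}$). For case b) this requires the local Carleman estimate near the Dirichlet boundary from \cite{kn:V4}. An alternative route is a contradiction/compactness argument: a sequence of normalized solutions violating the bound would converge, by local compactness and the radiation condition supplied by the high-frequency/limiting absorption theory, to an outgoing solution of $(P-\lambda^2)u=0$. That solution would be a positive eigenfunction, which unique continuation (again via a Carleman estimate valid for $L^\infty$ coefficients) excludes.

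\emph{Derivatives.} Finally, the cases $|\alpha|,|\beta|\le1$ follow by pairing $(P-\lambda^2\pm i\varepsilon)u=f$ with $\langle x\rangle^{-2s}\bar u$ and integrating by parts. This gives
\begin{equation*}
\|\langle x\rangle^{-s}\nabla u\|^2\lesssim\lambda^2\|\langle x\rangle^{-s}u\|^2+\|\langle x\rangle^{s}f\|\,\|\langle x\rangle^{-s}u\|,
\end{equation*}
which yields the extra factor $\lambda$ for $\alpha$. Duality gives it for $\beta$.
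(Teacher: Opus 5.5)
The paper does not prove Theorem~\ref{4.1} itself. It imports it from \cite{kn:LSV}, and the introduction indicates how that source proceeds: in case b) the high-frequency bound is obtained from \eqref{eq:1.4} by a resolvent-remainder argument, and the medium frequencies by the local Carleman estimate of \cite{kn:V4}. Your treatment of case b), of the medium-frequency range, and of the passage to $|\alpha|,|\beta|\le 1$ is consistent with this. Two of these steps need repair, but both are fixable. For $|\alpha|=|\beta|=1$ you need the energy identity with the divergence-form source $\partial_x^\beta(\langle x\rangle^{-s}g)$, not duality alone. The compactness alternative should be run as an Agmon-type Fredholm argument rather than borrowing the radiation condition from the principle being proved. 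The real gap is case a) at high frequency, which is exactly the non-perturbative step.

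\textbf{The magnetic term cannot be absorbed using $\lambda\gg1$.} A radial multiplier $\varphi\,\partial_r\bar u$ produces positive terms that are quadratic in $\nabla u$ and $\lambda u$, with coefficients built from $\varphi$ alone. The magnetic term $2\,{\rm Re}\int 2i\varphi\,(b\cdot\nabla u)\,\overline{\partial_r u}\,dx$ is only bounded by $C\|\langle x\rangle^{2s}b\|_{L^\infty}\|\langle x\rangle^{-s}\nabla u\|^2$. Both quantities are of order $\lambda^2\|\langle x\rangle^{-s}u\|^2$, so taking $\lambda\gg1$ gains nothing. This is the same scaling obstruction you correctly diagnosed for the Neumann series, where $\|\langle x\rangle^{s}b\cdot\nabla(P_0-\lambda^2)^{-1}\langle x\rangle^{-s}\|$ is $O(1)$ but not small; it simply reappears inside the identity. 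Only the zero-order terms $V$ and $|b|^2$ become small relative to the main terms as $\lambda\to\infty$.

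\textbf{The divergence term is worse.} For $b\in L^\infty$, the part $i(\nabla\cdot b)u$ is only an $H^{-1}$ source. Pairing it with $\varphi\,\partial_r\bar u$ forces either $\partial_r b$ or $\nabla\partial_r u$ against the rough coefficient. In the natural $H^{-1}$ version of your key inequality it contributes $\lambda\|\langle x\rangle^{2s}b\|_{L^\infty}\|\langle x\rangle^{-s}u\|$, again of exactly critical size. So your key inequality cannot be closed this way unless $\|\langle x\rangle^{2s}b\|_{L^\infty}$ is small.

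\textbf{What is missing.} You need a source of smallness for the first-order term that does not come from $\lambda$. One option is to combine two ingredients:
\begin{itemize}
\item the decay of $b$ at infinity: since $2s<\rho$, one has $\langle x\rangle^{2s}|b|\lesssim R^{2s-\rho}$ on $|x|\ge R$;
\item on the compact region, a Carleman-type weight whose logarithmic derivative dominates $|b|$, with a large parameter fixed in terms of $b$; $\lambda$ is then taken large only to absorb the errors generated by derivatives of the weight.
\end{itemize}
This must be done in a formulation that accepts divergence-form right-hand sides, so that no derivative lands on $b$. That is the substantive content of the high-frequency estimate for $L^\infty$ magnetic potentials on which \cite{kn:LSV} rests. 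Your proposal correctly names this as the obstacle, but then resolves it with an absorption argument that fails.
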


In Appendix \ref{limiting absorption appendix} we recall how \eqref{eq:4.2} implies the limiting absorption principle, namely that  
\begin{equation*}
(P-\lambda^2)^{-1} : \langle x\rangle^{-s}H^{-1}(\mathbb{R}^d)\to \langle x\rangle^{s}H^1(\mathbb{R}^d)
\end{equation*}
has a continuous extension from $\pm \Im \lambda > 0$ to $\R \setminus \{0\}$. In what follows in this section we derive Theorem \ref{1.1} from Proposition \ref{2.1}, Theorem \ref{4.1}, and the limiting absorption principle.

 \begin{proof}[Proof of Theorem \ref{1.1} in case a)]
 We will make use of the resolvent identities obtained in \cite[Section 3]{kn:LSV}, which allow to express the perturbed resolvent in terms
 of the free one. We will keep the same notation. Let $\lambda\in\mathbb{C}^-$ and denote by $I$ the identity operator. 
 The analysis in \cite{kn:LSV} is based on the following two resolvent identities
\begin{align*}
& (P- \lambda^2)^{-1} (\widetilde{V} +  i\nabla\cdot b+ib\cdot\nabla) =  
I - (P- \lambda^2)^{-1} (P_0 - \lambda^2) \quad \text{on }H^2(\R^d), \\
& (P_0- \lambda^2)^{-1} (\widetilde{V} +  i\nabla\cdot b+ib\cdot\nabla) =  
-I + (P_0- \lambda^2)^{-1} (P - \lambda^2) \quad \text{on } D(P), 
\end{align*}
 where $\widetilde V = V + |b|^2$. These yield
\begin{equation}\label{eq:4.3}
\begin{split}
&(P-\lambda^2)^{-1}-(P_0-\lambda^2)^{-1}\\
&=-(P_0-\lambda^2)^{-1}(\widetilde V+i\nabla\cdot b+ib\cdot\nabla)(P-\lambda^2)^{-1}\\
&=-(P-\lambda^2)^{-1}(\widetilde V+i\nabla\cdot b+ib\cdot\nabla)(P_0-\lambda^2)^{-1}.
\end{split}
\end{equation}
Let $z\in\mathbb{C}^-$. By \eqref{eq:4.3},
\begin{equation}\label{eq:4.4}
\begin{split}
&(P-\lambda^2)^{-1}-(P-z^2)^{-1}\\
&=(\lambda^2-z^2)(P-z^2)^{-1}(P-\lambda^2)^{-1}\\
&=L^\sharp(z)((P_0-\lambda^2)^{-1}-(P_0-z^2)^{-1})L^\flat(\lambda),
\end{split}
\end{equation}
where
\begin{align*}
& L^\sharp(z)=I-(P-z^2)^{-1}(\widetilde V+i\nabla\cdot b+ib\cdot\nabla),\\
 & L^\flat(\lambda) =I-(\widetilde V+i\nabla\cdot b+ib\cdot\nabla)(P-\lambda^2)^{-1}.
 \end{align*}
Multiplying (\ref{eq:4.4}) on left by $\mu \nabla^{\ell}$, $\ell \in \{0, 1\}$, and the right by $\mu$, we get
\begin{equation}\label{eq:4.5}
\begin{split}
&\mu \nabla^{\ell} (P-\lambda^2)^{-1}\mu-\mu \nabla^{\ell}(P-z^2)^{-1}\mu\\
&=\sum_{\ell_1=0}^1\sum_{\ell_2=0}^1L^\sharp_{\ell, \ell_1}(z)\mu^{1-\ell_1}(-i\mu^{-1}b\cdot\nabla)^{\ell_1}((P_0-\lambda^2)^{-1}
-(P_0-z^2)^{-1})(-i\nabla\cdot b\mu^{-1})^{\ell_2}\mu^{1-\ell_2}L^\flat_{\ell_2}(\lambda)
\end{split}
\end{equation}
where, for $\ell \in \{0, 1\}$,
\begin{align*}
& L_{\ell,0}^\sharp= (\mu \nabla \mu^{-1})^{\ell} -\mu \nabla^{\ell}(P-z^2)^{-1}(\widetilde V+i\nabla\cdot b)\mu^{-1},\\
& L_{\ell,1}^\sharp=\mu \nabla^{\ell}(P-z^2)^{-1}\mu,\\
& L_0^\flat=I-\mu^{-1}(\widetilde V+ib\cdot\nabla)(P-\lambda^2)^{-1}\mu,\\
& L_1^\flat=\mu(P-\lambda^2)^{-1}\mu,
\end{align*}
We now let the operator $i\mu^{-1}b\cdot\nabla$ act on the left side of (\ref{eq:4.4}) and multiply the right side by $\mu$.
We get
\begin{equation}\label{eq:4.6}
i\mu^{-1}b\cdot\nabla(P-\lambda^2)^{-1}\mu=T_1(\lambda, z)+T_2(\lambda,z)\mu(P-\lambda^2)^{-1}\mu+T_3(\lambda,z)
\mu^{-1}ib\cdot\nabla(P-\lambda^2)^{-1}\mu,
\end{equation}
where
\begin{align*}
& T_1=i\mu^{-1}b\cdot\nabla(P-z^2)^{-1}\mu -\sum_{\ell_1=0}^1 \widetilde L^\sharp_{\ell_1}(z)\mu^{1-\ell_1}
(i\mu^{-1}b\cdot\nabla)^{\ell_1}((P_0 - \lambda^2)^{-1} - (P_0 - z^2)^{-1})\mu ,\\
& T_2=\sum_{\ell_1=0}^1\sum_{\ell_2=0}^1\widetilde L^\sharp_{\ell_1}(z)\mu^{1-\ell_1}(i\mu^{-1}b\cdot\nabla)^{\ell_1}((P_0-\lambda^2)^{-1}-(P_0-z^2)^{-1})
(\widetilde V\mu^{-1})^{1-\ell_2}(i\nabla\cdot b\mu^{-1})^{\ell_2},\\
& T_3=\sum_{\ell_1=0}^1\widetilde L^\sharp_{\ell_1}(z)\mu^{1-\ell_1}
(i\mu^{-1}b\cdot\nabla)^{\ell_1}((P_0-\lambda^2)^{-1}-(P_0-z^2)^{-1})\mu,\\
&\widetilde L_0^\sharp=i\mu^{-1}b\cdot\nabla(P-z^2)^{-1}(\widetilde V+i\nabla\cdot b)\mu^{-1},\\
&\widetilde L_1^\sharp=-I+i\mu^{-1}b\cdot\nabla(P-z^2)^{-1}\mu.
\end{align*}
In view of the limiting absorption principle, the identities (\ref{eq:4.5}) and (\ref{eq:4.6}) extend to all $\lambda,z\in\R\setminus \{0\}$. 
Fix $z \in \R$, $|z|\ge\delta$, with $0<\delta\ll 1$ being arbitrary. Consider the above operators as functions of $\lambda\in\Lambda_\gamma(z):=\{\lambda\in\R:|\lambda-z|\le\gamma\}$, where $0<\gamma<\delta/2$ is a small parameter
independent of $z$ to be fixed below. Due to the condition \eqref{eq:1.6}, the operators 
$\widetilde L_{0,0}^\sharp$ and $\widetilde L_{0,1}^\sharp$ are bounded on $L^2(\R^d)$. Moreover, it follows from the resolvent estimate (\ref{eq:4.2}) that
\begin{equation}\label{eq:4.7}
\|\widetilde L_{\ell_1}^\sharp(z)\|\lesssim (|z|+1)^{1-\ell},\quad \ell_1=0,1.
\end{equation}
On the other hand, using the estimate (\ref{eq:2.1}) with $k=1$,
\begin{equation}\label{eq:4.8}
\begin{split}
&\left\|\mu \nabla^{\ell}((P_0-\lambda^2)^{-1}-(P_0-z^2)^{-1})\mu\right\|\\
&\lesssim|\lambda-z|\sup_{\sigma=\kappa\lambda+(1-\kappa)z,\,0\le\kappa\le 1}\left\|\frac{d}{d\sigma} \mu \nabla^{\ell}(P_0-\sigma^2)^{-1}\mu\right\|\\
&\lesssim |\lambda-z|(|z|+1)^{\ell-1},\quad \ell=0,1.
\end{split}
\end{equation}
By (\ref{eq:4.7}) and (\ref{eq:4.8}),
\begin{equation}\label{eq:4.9}
\|T_3(\lambda,z)\|\lesssim \gamma\le 1/2,
\end{equation}
if $\lambda\in\Lambda_\gamma(z)$ with a constant $\gamma>0$ small enough. Therefore, the operator $I-T_3$
is invertible for $\lambda\in\Lambda_\gamma(z)$, so we can rewrite (\ref{eq:4.6}) in the form
\begin{equation}\label{eq:4.10}
\mu^{-1}ib\cdot\nabla(P-\lambda^2)^{-1}\mu=(I-T_3)^{-1}T_1+(I-T_3)^{-1}T_2\mu(P-\lambda^2)^{-1}\mu.
\end{equation}
By (\ref{eq:4.5}) with $\ell = 0$ and (\ref{eq:4.10}),
\begin{equation}\label{eq:4.11}
\mu(P-\lambda^2)^{-1}\mu=F_1(\lambda,z)+F_2(\lambda,z)\mu(P-\lambda^2)^{-1}\mu,\quad \lambda\in\Lambda_\gamma(z),
\end{equation}
where
\begin{align*}
 F_1&=\mu(P-z^2)^{-1}\mu + \sum_{\ell_1 = 0}^1 L^\sharp_{0, \ell_1}(z)\mu^{1-\ell_1}(-i\mu^{-1}b\cdot\nabla)^{\ell_1} ((P_0-\lambda^2)^{-1}-(P_0-z^2)^{-1})\mu \\
 &-\sum_{\ell_1=0}^1L^\sharp_{0, \ell_1}(z)\mu^{1-\ell_1}(-i\mu^{-1}b\cdot\nabla)^{\ell_1}
((P_0-\lambda^2)^{-1}-(P_0-z^2)^{-1})\mu(I-T_3)^{-1}T_1,\\
 F_2&=\sum_{\ell_1=0}^1\sum_{\ell_2=0}^1L^\sharp_{0, \ell_1}(z)\mu^{1-\ell_1}(-i\mu^{-1}b\cdot\nabla)^{\ell_1} \\
 &\cdot ((P_0-\lambda^2)^{-1}-(P_0-z^2)^{-1})(-i\nabla\cdot b\mu^{-1})^{\ell_2}(-\widetilde V\mu^{-1})^{1-\ell_2}\\
&-\sum_{\ell_1=0}^1L^\sharp_{0, \ell_1}(z)\mu^{1-\ell_1}(-i\mu^{-1}b\cdot\nabla)^{\ell_1}
((P_0-\lambda^2)^{-1}-(P_0-z^2)^{-1})\mu(I-T_3)^{-1}T_2.
\end{align*}
 By (\ref{eq:4.2}) we also have
\begin{equation}\label{eq:4.12}
\|T_j(\lambda,z)\|\lesssim (|z|+1)^{j-1},\quad j=1,2,
\end{equation}
\begin{equation}\label{eq:4.13}
\|L_{0, \ell_1}^\sharp(z)\|\lesssim (|z|+1)^{-\ell_1},\quad \ell_1=0,1.
\end{equation}
As above, by (\ref{eq:4.9}), (\ref{eq:4.12}), (\ref{eq:4.13}) and (\ref{eq:2.1}) with $k=1$,
\begin{equation}\label{eq:4.14}
\|F_1(\lambda,z)\|\lesssim (|z|+1)^{-1},
\end{equation}
\begin{equation}\label{eq:4.15}
\|F_2(\lambda,z)\|\lesssim \gamma\le 1/2,
\end{equation}
if $\lambda\in\Lambda_\gamma(z)$ with possibly a new constant $\gamma>0$ small enough.
Hence, the operator $I-F_2$
is invertible for $\lambda\in\Lambda_\gamma(z)$, and we get the identity
\begin{equation}\label{eq:4.16}
\mu(P-\lambda^2)^{-1}\mu=(I-F_2(\lambda,z))^{-1}F_1(\lambda,z),\quad \lambda\in\Lambda_\gamma(z).
\end{equation}
We will use (\ref{eq:4.16}) to bound the norm of the operator
\begin{equation*}
\frac{d^k}{d\lambda^k}\mu(P-\lambda^2)^{-1}\mu,\quad \lambda\in\Lambda_\gamma(z),
\end{equation*}
where $k\ge 1$ is any integer. To this end, observe that
\begin{align*}
& \frac{d^kT_1}{d\lambda^k}= -\sum_{\ell_1=0}^1 \widetilde L^\sharp_{\ell_1}(z)\frac{d^k}{d\lambda^k}\left(\mu^{1-\ell_1}
(i\mu^{-1}b\cdot\nabla)^{\ell_1}(P_0 - \lambda^2)^{-1}\mu\right),\\
&\frac{d^kT_2}{d\lambda^k}=\sum_{\ell_1=0}^1\sum_{\ell_2=0}^1\widetilde L^\sharp_{\ell_1}(z)\frac{d^k}{d\lambda^k}\left(\mu^{1-\ell_1}(i\mu^{-1}b\cdot\nabla)^{\ell_1}(P_0-\lambda^2)^{-1}
(\widetilde V\mu^{-1})^{1-\ell_2}(i\nabla\cdot b\mu^{-1})^{\ell_2}\right),\\
& \frac{d^kT_3}{d\lambda^k}=\sum_{\ell_1=0}^1\widetilde L^\sharp_{\ell_1}(z)\frac{d^k}{d\lambda^k}\left(\mu^{1-\ell_1}
(i\mu^{-1}b\cdot\nabla)^{\ell_1}(P_0-\lambda^2)^{-1}\mu\right).
\end{align*}
Therefore, by (\ref{eq:4.7}) and (\ref{eq:2.1}) we have
\begin{equation}\label{eq:4.17}
\left\|\frac{d^kT_j(\lambda,z)}{d\lambda^k}\right\|\lesssim C^{k+1}m_k(|z|+1)^{j-1},\quad \lambda\in\Lambda_\gamma(z),\quad j=1,2,
\end{equation}
\begin{equation}\label{eq:4.18}
\left\|\frac{d^kT_3(\lambda,z)}{d\lambda^k}\right\|\lesssim C^{k+1}m_k,\quad \lambda\in\Lambda_\gamma(z).
\end{equation}
In view of Lemma \ref{3.2}, (\ref{eq:4.9}) and (\ref{eq:4.18}) imply
\begin{equation}\label{eq:4.19}
\left\|\frac{d^k}{d\lambda^k}(I-T_3(\lambda,z))^{-1}\right\|\lesssim C^{k+1}m_k,\quad \lambda\in\Lambda_\gamma(z).
\end{equation}
with possibly a new constant $C>0$. Observe now that
\begin{align*}
\frac{d^kF_1}{d\lambda^k}&= \sum_{\ell_1 = 0}^1 L^\sharp_{0, \ell_1}(z)\frac{d^k}{d\lambda^k}\left(\mu^{1-\ell_1}(-i\mu^{-1}b\cdot\nabla)^{\ell_1}
(P_0-\lambda^2)^{-1}\mu\right) \\
 &-\sum_{\ell_1=0}^1L^\sharp_{0, \ell_1}(z)\frac{d^k}{d\lambda^k}\left(\mu^{1-\ell_1}(-i\mu^{-1}b\cdot\nabla)^{\ell_1}
((P_0-\lambda^2)^{-1}-(P_0-z^2)^{-1})\mu(I-T_3)^{-1}T_1\right),\\
\frac{d^kF_2}{d\lambda^k}&=\sum_{\ell_1=0}^1\sum_{\ell_2=0}^1L^\sharp_{0,\ell_1}(z)\frac{d^k}{d\lambda^k}
\left(\mu^{1-\ell_1}(-i\mu^{-1}b\cdot\nabla)^{\ell_1}(P_0-\lambda^2)^{-1}
(-i\nabla\cdot b\mu^{-1})^{\ell_2}(-\widetilde V\mu^{-1})^{1-\ell_2}\right)\\
&-\sum_{\ell_1=0}^1L^\sharp_{0,\ell_1}(z)\frac{d^k}{d\lambda^k}\left(\mu^{1-\ell_1}(-i\mu^{-1}b\cdot\nabla)^{\ell_1}
((P_0-\lambda^2)^{-1}-(P_0-z^2)^{-1})\mu(I-T_3)^{-1}T_2\right),
\end{align*}
 for any integer $k\ge 1$. Therefore, by Lemma \ref{3.1} and (\ref{eq:4.13}), (\ref{eq:4.17}), (\ref{eq:4.19}), we obtain
 \begin{equation}\label{eq:4.20}
\left\|\frac{d^kF_j(\lambda,z)}{d\lambda^k}\right\|\lesssim C^{k+1}m_k(|z|+1)^{j-2},\quad \lambda\in\Lambda_\gamma(z),\quad j=1,2.
\end{equation}
In view of Lemma \ref{3.2}, (\ref{eq:4.15}) and (\ref{eq:4.20}) with $j=2$ imply
\begin{equation}\label{eq:4.21}
\left\|\frac{d^k}{d\lambda^k}(I-F_2(\lambda,z))^{-1}\right\|\lesssim C^{k+1}m_k,\quad \lambda\in\Lambda_\gamma(z).
\end{equation}
with possibly a new constant $C>0$. By Lemma \ref{3.1}, (\ref{eq:4.16}), (\ref{eq:4.20}) with $j=1$, and (\ref{eq:4.21}), we conclude
 \begin{equation}\label{eq:4.22}
\left\|\frac{d^k}{d\lambda^k}\mu(P-\lambda^2)^{-1}\mu\right\|\lesssim C^{k+1}m_k(|z|+1)^{-1},\quad \lambda\in\Lambda_\gamma(z).
\end{equation}
In particular, (\ref{eq:4.22}) holds with $\lambda=z$. Since $z\in\R$ is arbitrary such that $|z|\ge\delta$, this proves (\ref{eq:1.12})
with $\ell=0$. To prove (\ref{eq:1.12}) with $\ell=1$ we begin from the first identity in (\ref{eq:4.3}). Multiplying on the left by
$\mu\nabla$ and on the right by $\mu$, we get
\begin{equation}\label{eq:4.23}
\begin{split}
\mu\nabla(P-\lambda^2)^{-1}\mu&=\mu\nabla(P_0-\lambda^2)^{-1}\mu\\
&-\mu\nabla(P_0-\lambda^2)^{-1}(\widetilde V\mu^{-1}+i\nabla\cdot b\mu^{-1})\mu(P-\lambda^2)^{-1}\mu\\
&-\mu\nabla(P_0-\lambda^2)^{-1}\mu \mu^{-1}ib\cdot\nabla(P-\lambda^2)^{-1}\mu.
\end{split}
\end{equation}
We now combine (\ref{eq:4.10}) and (\ref{eq:4.23}) to obtain the identity
\begin{equation}\label{eq:4.24}
\begin{split}
\mu\nabla(P-\lambda^2)^{-1}\mu&=\mu\nabla(P_0-\lambda^2)^{-1}\mu\\
&-\mu\nabla(P_0-\lambda^2)^{-1}(\widetilde V\mu^{-1}+i\nabla\cdot b\mu^{-1})\mu(P-\lambda^2)^{-1}\mu\\
&-\mu\nabla(P_0-\lambda^2)^{-1}\mu(I-T_3)^{-1}T_2\mu(P-\lambda^2)^{-1}\mu\\
&-\mu\nabla(P_0-\lambda^2)^{-1}\mu (I-T_3)^{-1}T_1.
\end{split}
\end{equation}
Using (\ref{eq:4.24}) together with Lemma \ref{3.1} and the estimates (\ref{eq:2.1}), (\ref{eq:4.17}), (\ref{eq:4.19}), and (\ref{eq:4.22}), we obtain
\begin{equation}\label{eq:4.25}
\left\|\frac{d^k}{d\lambda^k}\mu\nabla(P-\lambda^2)^{-1}\mu\right\|\lesssim C^{k+1}m_k,\quad \lambda\in\Lambda_\gamma(z),
\end{equation}
which proves (\ref{eq:1.12}) with $\ell=1$.

Finally, we consider the case where the dimension is odd and the condition \eqref{eq:1.11} holds. In this setting, the estimate \eqref{eq:2.1} is valid for all $\lambda \in \R$. Consequently, the identity \eqref{eq:4.5}, together with the bounds \eqref{eq:1.11} and \eqref{eq:4.8}, shows that the operators $\mu \nabla^{\ell}(P-\lambda^2)^{-1}\mu$, $\ell \in \{0, 1\}$, admit continuous extensions not only from $\lambda\in\C^{-}$ to $\R\setminus\{0\}$, but also to $\lambda=0$. It then follows in a straightforward manner that all of the identities and estimates established above remain valid without the auxiliary assumption $|z|\ge\delta$. In particular, the preceding arguments apply uniformly for all $z\in\R$.\\
 \end{proof}

\begin{proof}[Proof of Theorem \ref{1.1} in case b)]
In this case the analysis is similar and even easier than that one in the case a) because of the lack of a magnetic potential.
However, some modifications are necessary due to the presence of an obstacle $\mathcal{O}$. 
In what follows we will sketch the main points. 
Again, we will make use of the resolvent identities obtained in Section 4 of \cite{kn:LSV} and we keep the same notations. 

Let $\eta\in C^\infty(\mathbb{R}^d)$ be of compact support such that
$\eta =1$ on $\mathcal{O}$.  For $\lambda\in\mathbb{C}^-$ we have
 \begin{equation*}
(P_0-\lambda^2)(1-\eta)(P-\lambda^2)^{-1}=([\Delta,\eta]-(1-\eta)V)(P-\lambda^2)^{-1}+1-\eta, \qquad \text{on } L^2(\Omega),
 \end{equation*}
which implies
\begin{equation}\label{eq:4.26}
(1-\eta)(P-\lambda^2)^{-1} =(P_0-\lambda^2)^{-1}([\Delta,\eta]-(1-\eta)V)(P-\lambda^2)^{-1}+(P_0-\lambda^2)^{-1}(1-\eta).
\end{equation}
Let $z\in\mathbb{C}^-$. Similarly,
\begin{equation}\label{eq:4.27}
\begin{split}
(P&-z^2)^{-1}(1-\eta)\\
&=(P-z^2)^{-1}([\eta,\Delta]-(1-\eta)V)(P_0-z^2)^{-1}+(1-\eta)(P_0-z^2)^{-1}, \qquad \text{on } L^2(\R^d).
\end{split}
\end{equation}
 In view of (\ref{eq:4.26}) and (\ref{eq:4.27}),
 \begin{equation*}
 \begin{split}
(P-\lambda^2)^{-1}-(P-z^2)^{-1}&=(\lambda^2-z^2)(P-z^2)^{-1}(P-\lambda^2)^{-1}\\
&=(\lambda^2-z^2)(P-z^2)^{-1}\eta(2-\eta)(P-\lambda^2)^{-1}\\
&+(\lambda^2-z^2)(P-z^2)^{-1}(1-\eta)^2(P-\lambda^2)^{-1}\\
&=(\lambda^2-z^2)(P-z^2)^{-1}\eta(2-\eta)(P-\lambda^2)^{-1}\\
&+(1-\eta+(P-z^2)^{-1}( [\eta,\Delta]-(1-\eta)V))((P_0-\lambda^2)^{-1}\\
&-(P_0-z^2)^{-1})(1-\eta+([\Delta,\eta]-(1-\eta)V)(P-\lambda^2)^{-1}).
\end{split}
 \end{equation*}
Multiplying both sides of this identity by $\mu$ we get
\begin{equation}\label{eq:4.28}
\begin{split}
\mu(P-\lambda^2)^{-1}\mu-\mu(P-z^2)^{-1}\mu&=(\lambda^2-z^2)\mu(P-z^2)^{-1}\eta(2-\eta)(P-\lambda^2)^{-1}\mu\\
 &+Q_1(z)(\mu(P_0-\lambda^2)^{-1}\mu-\mu(P_0-z^2)^{-1}\mu)Q_2(\lambda),
 \end{split}
 \end{equation}
 where 
 \begin{align*}
& Q_1(z)=1-\eta+\mu(P-z^2)^{-1}([\eta,\Delta]-(1-\eta)V)\mu^{-1},\\
& Q_2(\lambda)=1-\eta+\mu^{-1}([\Delta,\eta]-(1-\eta)V)(P-\lambda^2)^{-1}\mu.
 \end{align*}
 We rewrite (\ref{eq:4.28}) in the form
 \begin{equation}\label{eq:4.29}
 \begin{split}
 &(I-K(\lambda,z))\mu(P-\lambda^2)^{-1}\mu=\mu(P-z^2)^{-1}\mu\\
 &+Q_1(z)(\mu(P_0-\lambda^2)^{-1}\mu-\mu(P_0-z^2)^{-1}\mu)(1-\eta),
 \end{split}
 \end{equation}
 where 
 \begin{equation*}
 \begin{split}
 K(\lambda,z)&=(\lambda^2-z^2)\mu(P-z^2)^{-1}\eta(2-\eta)\mu^{-1}\\
 &+Q_1(z)(\mu(P_0-\lambda^2)^{-1}-\mu(P_0-z^2)^{-1})([\Delta,\eta]-(1-\eta)V)\mu^{-1}.
 \end{split}
 \end{equation*}
 As in the previous case, in view of the limiting absorption principle, the identities (\ref{eq:4.28}) and (\ref{eq:4.29}) extend to all $\lambda,z\in\R\setminus \{0\}$. We fix $z\in\R$, $|z|\ge\delta$, $0<\delta\ll 1$ being arbitrary, and consider the above operators as functions of $\lambda\in\Lambda_\gamma(z):=\{\lambda\in\R:|\lambda-z|\le\gamma\}$, where $0<\gamma<\delta/2$ is a parameter
independent of $z$ to be fixed below.
It follows from (\ref{eq:4.2}) that
\begin{equation}\label{eq:4.30}
\|Q_1(z)\|\lesssim 1.
\end{equation}
By (\ref{eq:2.1}) with $k=1$, (\ref{eq:4.2}), (\ref{eq:4.30}), as above, we get
\begin{equation}\label{eq:4.31}
\|K(\lambda,z)\|\lesssim \gamma\le 1/2
\end{equation}
for $\lambda\in\Lambda_\gamma(z)$ with a small constant $\gamma>0$. Observe that
\begin{equation*}
 \begin{split}
\frac{d^kK}{d\lambda^k}&=\mu(P-z^2)^{-1}\eta(2-\eta)\mu^{-1}\frac{d^k\lambda^2}{d\lambda^k}\\
 &+Q_1(z)\frac{d^k}{d\lambda^k}\mu(P_0-\lambda^2)^{-1}([\Delta,\eta]-(1-\eta)V)\mu^{-1}
 \end{split}
 \end{equation*}
 for any integer $k\ge 1$. Therefore, by (\ref{eq:2.1}) together with (\ref{eq:4.30}) and (\ref{eq:4.31}), we get
\begin{equation}\label{eq:4.32}
\left\|\frac{d^kK(\lambda,z)}{d\lambda^k}\right\|\lesssim C^{k+1}m_k,\quad \lambda\in\Lambda_\gamma(z).
\end{equation}
In view of Lemma \ref{3.2}, (\ref{eq:4.31}) and (\ref{eq:4.32}) imply
\begin{equation}\label{eq:4.33}
\left\|\frac{d^k}{d\lambda^k}(I-K(\lambda,z))^{-1}\right\|\lesssim C^{k+1}m_k,\quad \lambda\in\Lambda_\gamma(z).
\end{equation}
with possibly a new constant $C>0$. Furthermore, by (\ref{eq:4.29}) we have
\begin{equation*}
 \frac{d^k}{d\lambda^k}\left((I-K(\lambda,z))\mu(P-\lambda^2)^{-1}\mu\right)=
 Q_1(z)\frac{d^k}{d\lambda^k}\mu(P_0-\lambda^2)^{-1}\mu(1-\eta)
 \end{equation*}
 for any integer $k\ge 1$. Therefore, (\ref{eq:4.30}) and (\ref{eq:2.1}) with $\ell =0$ imply
 \begin{equation}\label{eq:4.34}
\left\|\frac{d^k}{d\lambda^k}\left((I-K(\lambda,z))\mu(P-\lambda^2)^{-1}\mu\right)
\right\|\lesssim C^{k+1}m_k(|z|+1)^{-1},\quad \lambda\in\Lambda_\gamma(z).
\end{equation}
Using Lemma \ref{3.1} we deduce from (\ref{eq:4.33}) and (\ref{eq:4.34}) that the estimate (\ref{eq:4.22}) still holds in case b),
which implies (\ref{eq:1.12}) with $\ell=0$. The estimate (\ref{eq:1.12}) with $\ell=1$ can be deduced from (\ref{eq:4.22})
in the same way as in the case a), using the identity
\begin{equation}\label{eq:4.35}
\begin{split}
\mu\nabla(P-\lambda^2)^{-1}\mu-\mu\nabla(P-z^2)^{-1}\mu&=(\lambda^2-z^2)\mu\nabla(P-z^2)^{-1}\eta(2-\eta)(P-\lambda^2)^{-1}\mu\\
 &+\widetilde Q_1(z)(\mu(P_0-\lambda^2)^{-1}\mu-\mu(P_0-z^2)^{-1}\mu)Q_2(\lambda),
 \end{split}
 \end{equation}
 where
 \begin{equation*}
\widetilde Q_1(z)=\mu\nabla(1-\eta)\mu^{-1}+\mu\nabla(P-z^2)^{-1}([\eta,\Delta]-(1-\eta)V)\mu^{-1}.
\end{equation*}

To address the situation where the dimension $d \ge 3$ is odd and \eqref{eq:1.11} holds, we argue as we did above for the case b). Since, in odd dimensions, \eqref{eq:2.1} is valid for all $\lambda \in \R$, \eqref{eq:4.28} and \eqref{eq:4.35}, together with \eqref{eq:1.11} and \eqref{eq:4.8}, show that $\mu \nabla^{\ell}(P-\lambda^2)^{-1}\mu$, $\ell \in \{0, 1\}$, admit continuous extensions to $\R$. Proceeding from \eqref{eq:4.29}, we may then verify that all subsequent identities and estimates established above remain valid uniformly for $z\in\R$.\\
\end{proof}

\section{Time decay estimates} \label{time decay section}

In this section we sketch how the arguments from \cite[Section 6]{kn:LSV} adapt to yield Theorem \ref{1.2}. Start, for some choice of $\Theta$, with the function $\psi(\lambda)$ defined in Section \ref{introduction section}. For $\delta > 0$ put $\psi_\delta(\lambda) \defeq \psi(\lambda/\delta)$. We have $0 \le \psi_\delta(\lambda) \le 1$, $\psi_\delta = 0$  near $(-\infty, \delta]$, and $\psi_\delta = 1$ near $[2\delta, \infty)$; for each $k\geq 0$,
\begin{equation*}
\left|\frac{d^k}{d\lambda^k}\psi_\delta(\lambda)\right|\leq (C\delta^{-1})^km_k.
\end{equation*}

An adaptation of the proof of \cite[Prop. 6.2]{kn:LSV} gives the following

\begin{prop} \label{integral estimates}
For all integers $k \ge 0$ and all $t>1$,
\begin{equation}\label{integral est L2}
\begin{split}
&\int_t^\infty \left\|\mu\cos(t'\sqrt{P})\psi_\delta(P^{1/2})\mu f\right\|_{L^2}^2dt'+
\int_t^\infty \left\|\mu\nabla^{\ell}P^{-1/2}\sin(t'\sqrt{P})\psi_\delta(P^{1/2})\mu f\right\|_{L^2}^2dt'\\
&\le  C^{2k+2}m_k^2t^{-2k}\|f\|_{L^2}^2,\qquad  f\in L^2,
\end{split}
\end{equation}
\begin{equation}\label{integral est H1}
\begin{split}
&\int_t^\infty \left\|\mu P^{1/2}\sin(t'\sqrt{P})\psi_\delta(P^{1/2})\mu f\right\|_{L^2}^2dt'+
\int_t^\infty \left\|\mu\nabla^{\ell}\cos(t'\sqrt{P})\psi_\delta(P^{1/2})\mu f\right\|_{L^2}^2dt'\\
&\le  C^{2k+2}m_k^2t^{-2k}\|f\|_{H^1}^2,\qquad  f\in H^1,
\end{split}
\end{equation}
where $\ell\in\{0,1\}$, and $C>0$ is a constant independent of $k$, $t$ and $f$. 
 If the dimension $d$ is odd and the condition (\ref{eq:1.11})
 is assumed, then the estimates \eqref{integral est L2} and \eqref{integral est H1} hold with $\psi\equiv 1$ for all integers $k\ge 0$. 
\end{prop}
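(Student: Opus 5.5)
The plan is to follow \cite[Prop.~6.2]{kn:LSV}: represent the propagators through Stone's formula, so that $\mu\cos(t\sqrt P)\psi_\delta(P^{1/2})\mu$ becomes an oscillatory integral in $\lambda$ of the jump of the weighted resolvent across the real axis. Plancherel in $t$ then turns the $L^2(dt')$ norms into $L^2(d\lambda)$ norms of $\lambda$-derivatives of this jump, and Theorem \ref{1.1} controls those derivatives.

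\emph{Representation.} For $f,g\in L^2$ the functional calculus gives
\[
\langle \mu\cos(t\sqrt P)\psi_\delta(P^{1/2})\mu f,g\rangle=\frac{1}{2\pi i}\int_0^\infty \cos(t\lambda)\psi_\delta(\lambda)\,2\lambda\,\langle \mathcal{R}(\lambda)f,g\rangle\,d\lambda ,
\]
where $\mathcal{R}(\lambda)=\mu\big((P-\lambda^2-i0)^{-1}-(P-\lambda^2+i0)^{-1}\big)\mu$. This jump exists by the limiting absorption principle. Extending $\mathcal{R}$ as an even function and using $\cos(t\lambda)=\frac12(e^{it\lambda}+e^{-it\lambda})$, one writes $\mu\cos(t\sqrt P)\psi_\delta\mu f=\int_{\R}e^{it\lambda}G(\lambda)f\,d\lambda$. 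The amplitude is $G(\lambda)=c\,\lambda\psi_\delta(|\lambda|)\mathcal{R}(\lambda)$, and it is supported in $|\lambda|\ge\delta$. The sine terms are handled the same way with amplitude $c\,\psi_\delta(|\lambda|)\,\mu\nabla^\ell\mathcal{R}_\ell(\lambda)$, where $\mathcal{R}_\ell$ is the analogous jump with $\nabla^\ell$ inserted, because $P^{-1/2}\sin$ produces the factor $2\lambda\cdot\lambda^{-1}$. Since $\mathcal{R}(\lambda)$ is the difference of the two boundary values of a resolvent with respect to $\lambda^2$, it can be rewritten through the two boundary values of $(P-\lambda^2)^{-1}$ at $\pm\lambda$ as in Theorem \ref{1.1}. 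Hence it obeys \eqref{eq:1.12}:
\[
\|\partial_\lambda^k\mu\nabla^\ell\mathcal{R}_\ell(\lambda)\|\le C^{k+1}m_k(|\lambda|+1)^{\ell-1},\quad |\lambda|\ge\delta .
\]

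\emph{Integration by parts and Plancherel.} Write $t^k e^{it\lambda}=(-i\partial_\lambda)^k e^{it\lambda}$ and integrate by parts $k$ times. There are no boundary terms, because $\psi_\delta$ vanishes near $|\lambda|\le\delta$ and the amplitude decays at infinity (see below). This gives $t^k\,\mu\cos(t\sqrt P)\psi_\delta\mu f=i^k\int e^{it\lambda}\partial_\lambda^kG(\lambda)f\,d\lambda$. Plancherel in $t$ then yields
\[
\int_t^\infty\|\cdot\|^2dt'\le t^{-2k}\int_{\R}t'^{2k}\|\cdot\|^2dt'\lesssim t^{-2k}\int_{\R}\|\partial_\lambda^kG(\lambda)f\|^2d\lambda .
\]
By Leibniz, the bounds on $\partial^j\psi_\delta$, Theorem \ref{1.1}, and \eqref{eq:1.10} (Lemma \ref{3.1}), we get $\|\partial^k_\lambda G(\lambda)\|\le C^{k+1}m_k$.

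\emph{Main obstacle: integrability in $\lambda$.} For the $L^2$ estimate \eqref{integral est L2} the amplitude is only bounded in $\lambda$, so the bound above does not suffice. I would borrow two powers of decay. One option is to apply the argument to $(P+1)^{-1}$ times the amplitude, paying through the identity $\lambda^2 (P-\lambda^2)^{-1}=P(P-\lambda^2)^{-1}-1$. A cleaner option, following \cite{kn:LSV}, is to use the resolvent identity $(P-\lambda^2)^{-1}=(P+1)^{-1}+(1+\lambda^2)(P+1)^{-1}(P-\lambda^2)^{-1}$. The $(P+1)^{-1}$ term is $\lambda$-independent and so contributes nothing after differentiation. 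In the remaining term one inserts the weights as in Lemma \ref{2.2}, so that $\mu(P+1)^{-1}\mu^{-1}$ is bounded, or trades derivatives against $\lambda^{-2}$. Either way the amplitude gains a factor $\langle\lambda\rangle^{-1}$ at the cost of one extra derivative on $f$. This is exactly why \eqref{integral est H1}, where $\mu\nabla^\ell\cos$ and $P^{1/2}\sin$ carry an extra power of $\lambda$, requires $f\in H^1$. In \eqref{integral est L2} the extra decay comes from the weight $(|\lambda|+1)^{-1}$ of \eqref{eq:1.12} with $\ell=0$, combined with the fact that $\psi_\delta(P^{1/2})\mu f$ lies in the range where this decay is available. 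Any remaining weak integrability can be handled by splitting the $\lambda$-integral dyadically and using $m_{k+2}\le C^{k+1}m_k$ from \eqref{eq:1.9}, so that shifting $k$ by a bounded amount costs only a geometric factor.

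\emph{Odd dimensions.} Under \eqref{eq:1.11} the bounds of Theorem \ref{1.1} hold for all real $\lambda$, so the cutoff $\psi_\delta$ can be dropped. The only additional point is to check that no boundary terms arise at $\lambda=0$; this follows because the amplitude is smooth across $0$ and $\mathcal{R}$ is even.
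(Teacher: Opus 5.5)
\textbf{The gap: square-integrability in $\lambda$.} Your framework (Stone's formula, Plancherel in $t$, Theorem \ref{1.1} for the $\lambda$-derivatives) is the natural one. The paper itself gives no proof and only defers to \cite[Prop.~6.2]{kn:LSV}, so I judge your outline on its own. The step you flag as the main obstacle is not resolved, and it is the heart of the proposition.

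After Plancherel you need $\int_{\R}\|\partial_\lambda^kG(\lambda)f\|^2\,d\lambda\le C^{2k+2}m_k^2\|f\|^2$ for every $f$. Operator-norm bounds give this only if $\|\partial_\lambda^kG(\lambda)\|$ is square integrable in $\lambda$. A constant family $G\equiv I$ satisfies every bound $\|\partial_\lambda^kG\|\le C^{k+1}m_k$ and is never in $L^2(d\lambda)$ against $f\neq0$.

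Since $dE_{\sqrt P}(\lambda)=2\lambda\,dE_P(\lambda^2)$, the cosine amplitude is $\lambda\psi_\delta\mathcal R(\lambda)$. The factor $\lambda$ exactly cancels the gain $(|\lambda|+1)^{-1}$ of \eqref{eq:1.12} with $\ell=0$. For $\mu\nabla P^{-1/2}\sin$ there is no gain at all. This non-decay is genuine: already for $P=P_0$, $\|\mu E'_{\sqrt{P_0}}(\lambda)\mu\|$ does not tend to $0$ as $\lambda\to\infty$.

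None of your remedies closes this.
\begin{itemize}
\item Identities with $(P+1)^{-1}$ gain $\langle\lambda\rangle^{-1}$ only by moving a derivative onto $f$, which is impossible in \eqref{integral est L2}.
\item In \eqref{integral est H1} the amplitudes of $\mu\nabla\cos$ and $\mu P^{1/2}\sin$ are $O(|\lambda|)$. The $H^1$ regularity brings them only down to $O(1)$, the same non-integrable situation.
\item The remark about the range of $\psi_\delta(P^{1/2})$ supplies nothing, since $\psi_\delta=1$ at high frequency.
\item Dyadic splitting plus $m_{k+2}\le C^{k+1}m_k$ changes constants in $k$ but creates no decay in $\lambda$. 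The block $[2^j,2^{j+1}]$ still only gets the bound $2^jC^{2k+2}m_k^2\|f\|^2$.
\end{itemize}
Your claim that the amplitude decays at infinity is also false. As written, your argument covers only terms with $O(|\lambda|^{-1})$ amplitudes. These are the $\ell=0$ sine term of \eqref{integral est L2} and, once a derivative is moved onto $f$, the $\ell=0$ cosine term of \eqref{integral est H1}.

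\textbf{What is missing.} You need a smoothing (Plancherel-type) estimate for $\lambda$-derivatives applied to a fixed $f$. It has to come from the structure of the spectral measure, not from its norm. For $k=0$ this is Kato's argument. Since $E'(\lambda)\ge0$, you have $\|\mu E'(\lambda)\mu f\|^2\le\|\mu E'(\lambda)\mu\|\,\langle E'(\lambda)\mu f,\mu f\rangle$, and $\int\langle E'(\lambda)\mu f,\mu f\rangle\,d\lambda\le\|f\|^2$.

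Positivity is destroyed by differentiation, so for $k\ge1$ you need an additional device. One possibility is to organize the $\lambda$-dependence with the identities of Section \ref{perturbed resolvent section}, so that the $\lambda$-derivatives fall on one of two kinds of factor:
\begin{itemize}
\item free-resolvent factors acting on $f$, where Plancherel or the explicit kernel turns $\partial_\lambda^j$ into powers of $|x|$ absorbed by $\mu$, giving $L^2(d\lambda)$ bounds with constants $C^jm_j$;
\item operators controlled in norm by \eqref{eq:1.12} and \eqref{eq:2.1}.
\end{itemize}
Any undifferentiated perturbed factor acting on $f$ must then be either $O(|\lambda|^{-1})$ or controlled by the $k=0$ smoothing. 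Your proposal contains no input of this kind.

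\textbf{Parity slip.} Write $R(\lambda)$ for the boundary value $(P-\lambda^2)^{-1}$ of Theorem \ref{1.1}. For $\lambda>0$ your $\mathcal R(\lambda)$ equals $\mu\big(R(-\lambda)-R(\lambda)\big)\mu$, whose smooth continuation is odd. For the cosine you must therefore extend $\lambda\mathcal R(\lambda)$ evenly, i.e.\ $\mathcal R$ oddly. Extending $\mathcal R$ evenly makes $G$ odd, which produces a sine rather than a cosine. It also creates a kink at $\lambda=0$, because $\partial_\lambda\mathcal R(0^+)=-2\mu\,\partial_\lambda R(0)\,\mu\neq0$ in general. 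This breaks your $\psi\equiv1$ argument in odd dimensions.
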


\begin{proof}[Proof of Theorem \ref{1.2}]

The proof of Theorem \ref{1.1} now follows by the same argument used to prove \cite[Theorem 1.1]{kn:LSV}. In particular, set
\begin{equation*}
u(t) = \sin(t \sqrt{P}) P^{-1/2} \psi_\delta(P^{1/2}) \mu f,\quad f \in D(P).
\end{equation*}
Computing the time derivative of the quantity 
\begin{equation*}
\left\|\mu \partial_tu(t)\right\|^2_{L^2}+\left\|\mu(i\nabla+b)u(t)\right\|^2_{L^2}
+\left\|\mu u(t)\right\|^2_{L^2}, 
\end{equation*}
we get
\begin{equation}\label{time deriv of energy}
\begin{split}
&\frac{d}{dt}\left(\left\|\mu \partial_tu(t)\right\|^2_{L^2}+\left\|\mu(i\nabla+b)u(t)\right\|^2_{L^2}
+\left\|\mu u(t)\right\|^2_{L^2}\right)\\
&=2{\rm \Re}\langle \mathcal{N}(\mu)u(t),\mu \partial_tu(t)\rangle_{L^2} + 2{\rm Re}\langle\mu\partial_t u(t),\mu u(t)\rangle_{L^2}\\
&\le 2\left\|\mu \partial_tu(t)\right\|^2_{L^2}+\left\|\mu u(t)\right\|^2_{L^2}+\left\|\mathcal{N}(\mu)u(t)\right\|^2_{L^2},
\end{split}
\end{equation}
where
\begin{equation*}
\mathcal{N}(\mu)=\mu^{-1}\left([-\Delta, \mu^2] + 2ib \cdot \nabla \mu^2 - V \mu^2 - (i\nabla+b)\cdot[i\nabla,\mu^2]\right) =
\sum_{\ell=0}^1O_{\ell}(\mu)\nabla^{\ell}.
\end{equation*}
By \eqref{time deriv of energy}, for all $T>t>1$, 
\begin{equation} \label{apply FTC}
\begin{split}
&\left\|\mu \partial_tu(t)\right\|_{L^2}^2+\left\|\mu(i\nabla+b)u(t)\right\|_{L^2}^2+\left\|\mu u(t)\right\|_{L^2}^2\\
&\lesssim \left\|\mu \partial_tu(T)\right\|_{L^2}^2+\left\|\mu(i\nabla+b)u(T)\right\|_{L^2}^2+\left\|\mu u(T)\right\|_{L^2}^2\\
&+\int_t^T\left\|\mu\partial_t u(t')\right\|_{L^2}^2dt'+\sum_{\ell=0}^1\int_t^T\left\|\mu\nabla^{\ell}u(t')\right\|_{L^2}^2dt'.
\end{split}
\end{equation}
On the other hand, it follows from \eqref{integral est L2} with $k=0$ that there exists a sequence $T_j\to\infty$ such that
\begin{equation}\label{lim to zero}
\lim_{T_j\to\infty} \left(\left\|\mu \partial_tu(T_j)\right\|_{L^2}^2+\left\|\mu(i\nabla+b)u(T_j)\right\|_{L^2}^2
+\left\|\mu u(T_j)\right\|_{L^2}^2\right)=0.
\end{equation}
Therefore, using \eqref{apply FTC} with $T=T_j$ and taking the limit as $T_j\to\infty$, in view of \eqref{lim to zero}, we obtain
\begin{equation}\label{local energy bd}
\begin{split}
&\left\|\mu \partial_tu(t)\right\|_{L^2}^2+\left\|\mu(i\nabla+b)u(t)\right\|_{L^2}^2+\left\|\mu u(t)\right\|_{L^2}^2\\
&\lesssim\int_t^\infty\left\|\mu\partial_t u(t')\right\|_{L^2}^2dt'+\sum_{\ell=0}^1
\int_t^\infty\left\|\mu\nabla^{\ell}u(t')\right\|_{L^2}^2dt'.
\end{split}
\end{equation}
By \eqref{integral est L2} and \eqref{local energy bd},
\begin{equation} \label{decay before fix k}
\left\|\mu\partial_t u(t)\right\|_{L^2}+\left\|\mu(i\nabla+b)u(t)\right\|_{L^2}+\left\|\mu u(t)\right\|_{L^2}\le C^{k+1}m_kt^{-k}\|f\|_{L^2},
\end{equation}
Now take $k = k((Ce)^{-1}t)$ to be the biggest integer such that $m_k^{1/k} \le (Ce)^{-1}t$, which implies $C^{k+1} m_k t^{-k} \le Ce^{-k((Ce)^{-1}t)}$, confirming \eqref{eq:1.13} for $f \in D(P)$. Then (\ref{eq:1.13}) follows for general $f \in L^2(\Omega)$ since $D(P)$ is dense in $L^2(\Omega)$.

To get (\ref{eq:1.14}), we apply the above analysis
to
\begin{equation*}
u(t) = \cos(t \sqrt{P}) \psi_\delta(P^{1/2})\mu f,\qquad f \in D(P),
\end{equation*}
 and use the estimate \eqref{integral est H1} instead of \eqref{integral est L2} to conclude that  \eqref{decay before fix k} holds with $\|f\|_{L^2}$ in the right-hand side replaced by $\|f\|_{H^1}$. Then \eqref{eq:1.14} follows from \eqref{decay before fix k} and the fact that $D(P)$ is dense in $D(P^{1/2})$ with respect to the norm $f \mapsto (\|f\|^2_{L^2} + \|P^{1/2} f\|^2_{L^2})^{1/2}$.

In odd dimensions, under the condition (\ref{eq:1.11}),
the preceding computations hold with $\psi_\delta \equiv 1$ because so do the estimates \eqref{integral est L2} and \eqref{integral est H1}. \\
\end{proof}

\appendix

\section{Construction of a Gevrey cutoff function} \label{gevrey cutoff appendix}

In this appendix, we outline the construction of the Gevrey-class cutoff function $\zeta$ utilized in the statement of Theorem \ref{1.2}. In particular, for each $0 < s < 1$, we construct $\zeta \in C^{\infty}(\mathbb{R}; [0,1])$ with $\zeta(\sigma) = 0$ for $\sigma \le 1/2$ and $\sigma \ge 2$, and such that $|\partial_\sigma^k \zeta(\sigma)| \le C^{k+1}(k!)^{1/s}$ and $\int_{\R} \zeta(\sigma) d\sigma = 1$.

The construction builds upon the function $f : \R \to \R$ given by
\begin{equation*}
    f(x) = \begin{cases} 
    e^{-x^{-a}} & x > 0, \\
    0 & x \le 0, 
    \end{cases}
\end{equation*}
with $a > 0$. It is easy to verify that $f \in C^\infty(\mathbb{R})$ by induction on the derivatives. To establish the Gevrey bounds, we utilize a complex variable argument. 

We extend $f$ to the complex plane by taking the principal branch of the logarithm, $z^{-a} = e^{-a(\ln|z| + i \arg z)}$. First, consider the case $a > 1$. For fixed $x > 0$, we consider the contour integral of $f(z)$ along a circle $\gamma_a$ centered at $x$ with radius $R = x/a$. By the Cauchy integral formula:
\begin{equation*}
    f^{(k)}(x) = \frac{k!}{2\pi i} \oint_{\gamma_a} \frac{e^{-z^{-a}}}{(z-x)^{k+1}} \, dz.
\end{equation*}
Estimating yields:
\begin{equation*}
    |f^{(k)}(x)| \le \frac{k!}{R^k} \sup_{z \in \gamma_a} \left| e^{-z^{-a}} \right| = k! \left(\frac{a}{x}\right)^k e^{-\inf_{z \in \gamma_a} \Re(z^{-a})}.
\end{equation*}

Parameterizing $\gamma_a$ as $z = x(1 + a^{-1}e^{i\theta})$, $\theta \in [0,2 \pi]$, it is straightforward to show that the argument of $1 + a^{-1} e^{i\theta}$ is bounded in magnitude by $\arctan(1/\sqrt{a^2 - 1})$. Consequently, the magnitude of the argument of $z^{-a}$ is bounded by $a \arctan(1/\sqrt{a^2 - 1})$, which is strictly less than $\pi/2$ for any $a > 1$. Whence
\begin{equation*}
\inf_{z \in \gamma_a} \Re(z^{-a}) \ge x^{-a} \left(1 - \frac{1}{a}\right)^{-a} \cos\left(a \arctan\frac{1}{\sqrt{a^2-1}}\right) \qefed c_a x^{-a}.
\end{equation*}
Substituting this back into the Cauchy estimate and applying Stirling's approximation, we obtain,
\begin{equation} \label{gevrey bd f}
    |f^{(k)}(x)| \le k! \left(\frac{a}{x}\right)^k e^{-c_a x^{-a}} \le a^k k! \sup_{y > 0} y^{1/a} e^{-c_a y} = a^k k!  \left( \frac{k}{ac_a} \right)^{k/a} e^{-k/a} \le C^k (k!)^{1 + \frac{1}{a}}
\end{equation}
for some $C > 0$ independent of $k$. This is the desired Gevrey bound for order $s = 1 + a^{-1}$ and $a > 1$

The case $0 < a \le 1$ is simpler because, in the above argument, one may choose the contour radius to be $R = x/2$, independent of $a$. Thus it is easier to bound $\Re(z^{-a})$ from below on the contour. 

With \eqref{gevrey bd f} established, we proceed with the construction of $\zeta$ by defining 
\begin{equation}
    \rho(x) = \frac{f(x)}{f(x) + f(1-x)} \in [0,1].
\end{equation}
It is clear that $\rho(x) = 0$ for $x \le 0$ and $\rho(x) = 1$ for $x \ge 1$. Moreover, since the denominator is strictly positive, $\rho$ has the the same Gevrey regularity as $f$ (the proofs of Lemmas \ref{3.1} and \ref{3.2} hold also for real valued functions). 

Next, set
\begin{equation}
    \chi(\sigma) = \rho\left(4\left(\sigma - \frac{1}{2}\right)\right) \rho\left(4\left(2 - \sigma\right)\right).
\end{equation}
It is easy to check that $\chi(\sigma) = 0$ for $\sigma \notin [1/2, 2]$, and $\chi(\sigma) = 1$ for $\sigma \in [3/4, 7/4]$. Consequently, $I \defeq \int_{\mathbb{R}} \chi(\sigma) \, d\sigma > 1$. Finally, defining $\zeta(\sigma) = I^{-1} \chi(\sigma)$ yields the desired function appearing in the statement of Theorem \ref{1.2}.

\section{The limiting absorption principle} \label{limiting absorption appendix}

In this appendix we address the limiting absorption principle for the magnetic Schr\"odinger operator 
\begin{equation*}
P=(i\nabla+b(x))^2+V(x) : L^2(\R^d) \to L^2(\R^d), \qquad d \ge 2.
\end{equation*}
where the electric potential $V\in L^\infty(\mathbb{R}^d,\mathbb{R})$ and the magnetic potential  $b\in L^\infty(\mathbb{R}^d,\mathbb{R}^d)$ satisfy
\begin{equation*}
|V(x)|+|b(x)|\le C\langle x\rangle^{-\rho}, \qquad C>0,\,\rho>1.
\end{equation*}
The limiting absorption principle is well known to hold for this class of short range potentials. Its relevance to our work motivates us to provide a concise proof here. In particular we show that for $s > 1/2$, the map $\lambda \mapsto (P-\lambda^2)^{-1} : \langle x\rangle^{-s}H^{-1}(\mathbb{R}^d)\to \langle x\rangle^{s}H^1(\mathbb{R}^d)$ admits a continuous extension, in the operator norm topology, from $\Im \lambda < 0$ to $\R \setminus \{0\}$. This is equivalent to establishing a continuous extension for $\langle x \rangle^{-s} \partial^\alpha_x(P-\lambda^2)^{-1} \partial^\beta_x \langle x \rangle^{-s} : L^2(\R^d) \to L^2(\R^d)$, for all multi-indices $\alpha$ and $\beta$ such that $|\alpha|, \, |\beta| \le 1$. Once the extension from $\Im \lambda < 0$ to $\R \setminus \{0\}$ is established, the corresponding extension from $\Im \lambda > 0$ to $\R \setminus \{0\}$ follows because the $L^2(\R^d) \to L^2(\R^d)$ norm of $\langle x \rangle^{-s} \partial^\alpha_x(P-\lambda^2)^{-1} \partial^\beta_x \langle x \rangle^{-s}$ and its adjoint $(-1)^{|\alpha| + |\beta|}\langle x \rangle^{-s} \partial^\beta_x(P-\overline{\lambda}^2)^{-1} \partial^\alpha_x \langle x \rangle^{-s}$ coincide.

The proof utilizes the resolvent bound \eqref{eq:4.2} and the resolvent identity \eqref{eq:4.3}. We will also need (and prove) the following estimate for the free resolvent squared: for any $s > 1/2$ and $0 < \delta < M$, there exist $C, \varepsilon  > 0$ so that
\begin{equation} \label{bd resolv square}
\begin{gathered}
\| \langle x \rangle^{-s} \partial^\alpha_x (P_0 - \lambda^2)^{-2} \partial^\beta_x  \langle x \rangle^{-s} \|_{L^2 \to L^2} \le C, \\  \delta \le |\Re \lambda | \le M, \, 0 < |\Im \lambda| \le \varepsilon, \, |\alpha| + |\beta| \le 2.
\end{gathered}
\end{equation}
 
Our goal is to show that, for any $0 < \delta < M$, there exists $\varepsilon > 0$ so that the $\lambda$-derivative of $\langle x \rangle^{-s}  \partial^\alpha_x(P-\lambda^2)^{-1}  \partial^\beta_x \langle x \rangle^{-s}$ is uniformly bounded for $\delta \le |\Re \lambda| \le M$ and $- \varepsilon \le \Im \lambda < 0$. Establishing this bound implies the uniform continuity of the weighted resolvent in these strips, yielding the desired continuous extension from $\Im \lambda < 0$ to $\R \setminus \{0\}$. Observe that 
\begin{equation*}
 \frac{d}{d\lambda}   \langle x \rangle^{-s} \partial^\alpha_x (P - \lambda^2)^{-1}  \partial^\beta_x \langle x \rangle^{-s} = 2\lambda \langle x \rangle^{-s}  \partial^\alpha_x (P - \lambda^2)^{-2}  \partial^\beta_x \langle x \rangle^{-s}. \end{equation*}
Thus it suffices to bound the $L^2(\R^d) \to L^2(\R^d)$ norm of the right side.

Note that for $u$ in the domain $D(P)$ of the operator $P$, we have in the distributional sense,
\begin{equation}
\begin{split}
Pu &=P_0 u+i\nabla\cdot (bu)+ib\cdot\nabla u+\widetilde Vu\\
&=P_0u+Qu,
\end{split}
\end{equation}
where $P_0 = -\Delta$ denotes the free Laplacian, $\widetilde V=V+|b|^2$, and $Q=\widetilde V + i\nabla\cdot (bu)+ib\cdot\nabla u$. Consequently, for $\Im \lambda < 0$,
\begin{equation} \label{first step square resolv id}
\begin{split}
 -\lambda^2\langle x \rangle^{-s}  \partial^\alpha_x(&P-\lambda^2)^{-2}  \partial^\beta_x\langle x \rangle^{-s}\\
 &=\langle x \rangle^{-s}  \partial^\alpha_x(P-\lambda^2)^{-1}  \partial^\beta_x \langle x \rangle^{-s}- \langle x \rangle^{-s}(P-\lambda^2)^{-1}P(P-\lambda^2)^{-1}\langle x \rangle^{-s},\\
 &=\langle x \rangle^{-s}  \partial^\alpha_x(P-\lambda^2)^{-1}  \partial^\beta_x\langle x \rangle^{-s}-\langle x \rangle^{-s}  \partial^\alpha_x (P-\lambda^2)^{-1}Q(P-\lambda^2)^{-1}  \partial^\beta_x \langle x \rangle^{-s}\\
 &-\langle x \rangle^{-s} \partial^\alpha_x(P-\lambda^2)^{-1}P_0(P-\lambda^2)^{-1}  \partial^\beta_x\langle x \rangle^{-s}.
 \end{split}
 \end{equation}
Inserting $1 = \langle x \rangle^{-\frac{\rho}{2}} \langle x \rangle^{\frac{\rho}{2}}$ at appropriate places within the operator $Q$ yields.
\begin{equation} \label{Q supplies weights}
Q = \langle x \rangle^{-\frac{\rho}{2}} (\langle x \rangle^{\rho} \widetilde V) \langle x \rangle^{-\frac{\rho}{2}} + i \nabla \cdot \langle x \rangle^{-\frac{\rho}{2}} (\langle x \rangle^{\rho} b) \langle x \rangle^{-\frac{\rho}{2}} + i \langle x \rangle^{-\frac{\rho}{2}} (\langle x \rangle^{\rho}b)\cdot \langle x \rangle^{-\frac{\rho}{2}} \nabla.  
\end{equation}
Combined with \eqref{eq:4.2}, \eqref{Q supplies weights} shows that for fixed $0 < \delta < M$ , there exists $\varepsilon > 0$ so that if $\delta < |\Re \lambda| \le M$ and $ \Im \lambda < 0$, the third line of \eqref{first step square resolv id} is uniformly bounded $L^2(\R^d) \to L^2(\R^d)$. 

We expand the fourth line of \eqref{first step square resolv id} using \eqref{eq:4.3}. Specifically,
\begin{equation} \label{unpack P0 in middle term}
\begin{split}
\langle x \rangle^{-s} \partial^\alpha_x(&P-\lambda^2)^{-1}P_0(P-\lambda^2)^{-1} \partial^\beta_x\langle x \rangle^{-s} \\
&= \langle x \rangle^{-s}  \partial^\alpha_x \big[ (P_0 - \lambda^2)^{-1} - (P- \lambda^2)^{-1} Q (P_0 - \lambda^2)^{-1} \big] P_0 \\
&\quad \cdot \big[ (P_0 - \lambda^2)^{-1} - (P_0- \lambda^2)^{-1} Q (P - \lambda^2)^{-1} \big]  \partial^\beta_x \langle x \rangle^{-s}.
\end{split}
\end{equation}
We rewrite the central factor as $P_0 = (P_0 - \lambda^2) + \lambda^2$. Applying this and \eqref{Q supplies weights} shows that \eqref{unpack P0 in middle term} can be written as a sum of operators, each of which is a product of bounded operators with polynomial dependence on $
\lambda$, and factors  of the form
\begin{equation*}
\langle x \rangle^{-s} \partial^\mu_x(P - \lambda^2)^{-1}\partial^\nu_x \langle x \rangle^{-s}, \quad
\langle x \rangle^{-s} \partial^\mu_x(P_0 - \lambda^2)^{-1}\partial^\nu_x \langle x \rangle^{-s}, \quad \text{or} \quad
\langle x \rangle^{-s} \partial^\mu_x(P_0 - \lambda^2)^{-2}\partial^\nu_x \langle x \rangle^{-s},
\end{equation*}
with multi-indices $\mu$ and $\nu$ satisying $|\mu| + |\nu| \le 1$. Therefore, for any fixed $0 < \delta < M$, there exists $\varepsilon > 0$ such that the factors involving single resolvents (of $P$ or $P_0$) are controlled by \eqref{eq:4.2}, while those involving the squared free resolvent are controlled by \eqref{bd resolv square}.

It thus remains to show \eqref{bd resolv square}. The starting point is the following resolvent identity established in \cite[Appendix E]{kn:llst25} for $s > 3/2$:
\begin{equation} \label{identity to insert Laplacian}
\begin{split}
-\lambda^2 \langle x \rangle^{-s} (&P_0 - \lambda^2)^{-2} \langle x \rangle^{-s} \\
&= - \tfrac{1}{2} \langle x \rangle^{-s} (P_0 - \lambda^2)^{-1}(2P_0) (P_0 - \lambda^2)^{-1} \langle x \rangle^{-s}  \\
& +\langle x \rangle^{-s} (P_0 - \lambda^2)^{-1} \langle x \rangle^{-s}\\
&=  \frac{1}{2} \langle x \rangle^{-s} (P_0 - \lambda^2)^{-1} (\nabla \cdot x  + (d-1))\langle x \rangle^{-s}  + \frac{1}{2} \langle x \rangle^{-s} (P_0 - \lambda^2)^{-1}  \langle x \rangle^{-s}\\
& -\frac{1}{2}  \langle x \rangle^{-s} x \cdot  \nabla (P_0 - \lambda^2)^{-1}  \langle x \rangle^{-s}. 
\end{split} 
\end{equation}
Using \eqref{eq:4.2} in \eqref{identity to insert Laplacian} yields \eqref{bd resolv square} subject to the restrictions $|\alpha| = |\beta| = 0$ and $s > 3/2$. Standard elliptic regularity arguments (and the fact that $\partial^\alpha_x$ commutes with $(P_0 - \lambda^2)^{-1}$) allow us to extend this bound to the general case $|\alpha| + |\beta| \le 1$, provided $s > 3/2$.

To relax the weight condition to $s > 1/2$, we employ a bootstrapping argument. First, assuming $s > 5/2$, we differentiate \eqref{identity to insert Laplacian} with respect to $\lambda$. The resulting expression shows $\langle x \rangle^{-s} (P - \lambda^2)^{-3} \langle x \rangle^{-s}$ can be controlled using the $s > 3/2$ bound we just established for the squared resolvent. This proves that if $s > 5/2$, the map $\lambda \mapsto \langle x \rangle^{-s} (P_0 - \lambda^2)^{-2} \langle x \rangle^{-s}$ is Lipschitz continuous in the region $\delta \le |\Re \lambda| \le M, \,  |\Im \lambda | \le \varepsilon$.

We relax the weight requirement as in \cite[Section 3]{kn:cavo04}, proceeding in steps. First, we extend continuity to the range $s > 3/2$. Let $\mathbf{1}_{\le A}$ and $\mathbf{1}_{\ge A}$ denote the characteristic functions of the sets $\{ |x| \le A\}$ and $\{ |x| \ge A \}$, respectively. Fix $s > 3/2$ and choose $\sigma > 5/2$. Let $\lambda_1, \lambda_2 \in \{z \in \C : \delta \le \Re z \le M, \, \epsilon \le \Im \lambda < 0\}$ (the case of negative real parts is identical). Then
\begin{equation*}
\begin{split}
\| \langle x \rangle^{-s} &((P_0- \lambda_1^2)^{-2} - (P_0 - \lambda^2_2)^{-2}) \langle x \rangle^{-\sigma} \|_{L^2 \to L^2} \\
&\le \langle A \rangle^{2(\sigma-s)} \| \mathbf{1}_{\le A} \langle x \rangle^{-\sigma} ((P_0- \lambda^2_1)^{-2} - (P_0 - \lambda^2_2)^{-2}) \langle x \rangle^{-\sigma} \|_{L^2 \to L^2} \\
&+  \sum_{j = 1}^2 \| \mathbf{1}_{\ge A} \langle x \rangle^{-s} (P_0- \lambda^2_1)^{-2} \langle x \rangle^{-\sigma} \|_{L^2 \to L^2} \end{split}
\end{equation*}
The first term is bounded by $C A^{2(\sigma-s)} |\lambda_1 - \lambda_2|$ using the Lipschitz continuity since $\sigma > 5/2$. For the other terms,  fix $\epsilon > 0$ such that $s - \epsilon > 3/2$. Then
\begin{equation*}
\sum_{j = 1}^2 \| \mathbf{1}_{\ge A} \langle x \rangle^{-s} (P_0- \lambda_j^2)^{-2} \langle x \rangle^{-\sigma} \| \le A^{-(s -\frac{3}{2} - \epsilon)}  \sum_{j = 1}^2 \| \langle x \rangle^{-\frac{3}{2}-\epsilon} (P_0- \lambda_j^2)^{-2} \langle x \rangle^{-\sigma} \|.
\end{equation*}
We previously concluded that the norms on the right are uniformly bounded. Choosing $A = |\lambda_1 - \lambda_2|^{-\kappa}$ for $\kappa > 0$ sufficiently small yields H\"older continuity for $\lambda \mapsto \langle x \rangle^{-s} (P_0 - \lambda^2)^{-2} \langle x \rangle^{-\sigma}$ with $s > 3/2$ and $\sigma > 5/2$.

We repeat this argument to replace the weight on the right by $\langle x \rangle^{-s}$ with $s > 3/2$, and then iterate twice more to reduce the weights on both sides to $\langle x \rangle^{-s}$ with $s > 1/2$. The H\"older exponents used in this process may decrease at each step. Finally, applying elliptic regularity extends \eqref{bd resolv square} to the case $|\alpha| + |\beta| \le 2$.

\section{Kernel of the operator $\cos(t \sqrt{P_0})$} \label{cosine kernel appendix}

Let $P_0$ denote the self-adjoint realization of $-\Delta$ on $L^2(\R^d)$, $d \ge 2$. In this section we review well known properties of the kernel of the operator $\cos(t \sqrt{P_0}) : L^2(\R^d) \to L^2(\R^d)$ which we utilize in Section \ref{free resolv est section}. This operator also takes the form 
\begin{equation*}
\begin{split}
\cos(t \sqrt{P_0}) &=  \mathcal{F}^{-1} \cos(t |\xi|) \mathcal{F} \\
&= \tfrac{1}{2} \mathcal{F}^{-1} \big( e^{it|\xi|} + e^{-it|\xi|}\big) \mathcal{F},
\end{split}
\end{equation*}
where $\mathcal{F}$ denotes Fourier transform. It is well known \cite[Chapter III, Section 2]{kn:S70} that for $\Re w > 0$, 
\begin{equation*}
(\mathcal{F}^{-1} e^{-w |\xi | } \mathcal{F} u)(x) = C_d w \int_{\R^d} \frac{u(y)}{(\sqrt{w^2 + |x - y|^2})^{d+1}} dy,  \qquad u \in L^2(\R^d),
\end{equation*} 
for some $C_d  > 0$ depending on $d$, and where the branch of the square root is chosen so that $\Im \sqrt{z} > 0$ if $\Im z \neq 0$.  Therefore, setting $w = t \pm i \varepsilon$ for $\varepsilon > 0$,
\begin{equation} \label{compute kernel}
(\mathcal{F}^{-1} e^{ \pm i |\xi |t} \mathcal{F} u)(x) = \lim_{\varepsilon \to 0} \mp C_d i (t \pm i \varepsilon) \int_{\R^d} \frac{u(y)}{(\sqrt{-(t \pm i \varepsilon)^2 + |x - y|^2})^{d+1}} dy,
\end{equation}
where the limit is in the sense of convergence in $L^2(\R^d)$. As in Section \ref{free resolv est section}, let $\phi \in C^\infty_0(\R^d ; [0, \infty])$ with $\phi(x) = 1$ for $|x| \le 1/8$, $\phi(x) = 0$ for $|x| \ge 1/4$. Whence $x\in {\rm supp}\,\phi(x/t)$ and
$y\in {\rm supp}\,\phi(y/t)$ imply $|x-y|\le t/2$. Thus, applying the $\phi(\cdot/t)$ to both sides of $\cos(t \sqrt{P_0})$ gives
\begin{equation*}
\begin{split}
\phi(\cdot /t) \cos(t \sqrt{P_0}) \phi(\cdot/ t) u &= \lim_{\varepsilon \to 0} \Big[ -C_d i(t + i \varepsilon) \int_{\R^d} \phi(x/t) \phi(y/t)\frac{u(y)}{(\sqrt{-(t + i \varepsilon)^2 + |x - y|^2})^{d+1}} dy \\
&+  C_d i(t - i \varepsilon) \int_{\R^d} \phi(x/t) \phi(y/t)\frac{u(y)}{(\sqrt{-(t - i \varepsilon)^2 + |x - y|^2})^{d+1}} dy \Big] \\
&= \lim_{\varepsilon \to 0} C_d \int_{\R^d}  \phi(x/t) \phi(y/t) \Re \Big[ \frac{-i(t + i\varepsilon)}{(\sqrt{-(t + i \varepsilon)^2 + |x - y|^2})^{d+1}} \Big] u(y) dy.
\end{split}
\end{equation*}
The resulting kernels depending on $\varepsilon$ converge in Hilbert-Schmidt norm as $\varepsilon \to 0$ to
\begin{equation*}
 C_d \phi(x/t) \Im \left( \frac{t (i\sgn t)^{d+1}}{(t^2 - |x - y|^2)^{\frac{d+1}{2}}} \right) \phi(y/t) = C_d t^{-d}   \phi(x/t) \Im \left( \frac{ i^{d+1}}{(1 - \frac{|x - y|^2}{t^2})^{\frac{d+1}{2}}} \right) \phi(y/t) .
\end{equation*}
When the dimension $d$ is even, this kernel is clearly of the form $\phi(x/t) \phi(y/t) t^{-d} W(|x - y|/t)$, where $W(z)$ is analytic in $|z| < 1$.

\end{document}